\let\underbrace\LaTeXunderbrace
\pgfplotsset{compat=1.15}
\providecommand{\Cref}{\cref}
\providecommand{\cref}{\cref}
\newcommand{\refcheckize}[1]{%
    \expandafter\let\csname @@\string#1\endcsname#1%
\expandafter\DeclareRobustCommand\csname relax\string#1\endcsname[1]{%
\csname @@\string#1\endcsname{##1}\wrtusdrf{##1}}%
 \expandafter\let\expandafter#1\csname relax\string#1\endcsname
}
\newtheorem*{rep@theorem}{\rep@title}
\newcommand{\newreptheorem}[2]{%
\newenvironment{rep#1}[1]{%
 \def\rep@title{#2 \ref{##1}}%
 \begin{rep@theorem}}%
 {\end{rep@theorem}}}
\theoremstyle{plain}
\newtheorem{thm}{Theorem}[section]
\newtheorem{lem}[thm]{Lemma}
\newtheorem{prop}[thm]{Proposition}
\newtheorem{cor}[thm]{Corollary}
\newtheorem{question}{Question}
\theoremstyle{definition}
\numberwithin{case}{thm}
\numberwithin{subcase}{case}
\theoremstyle{definition}
\newtheorem{defn}[thm]{Definition}
\newtheorem{ex}[thm]{Example}
\newtheorem{notation}[thm]{Notation}
\newtheorem{remark}[thm]{Remark}
\DeclareMathOperator{\reg}{reg}
\DeclareMathOperator{\init}{in}
\begin{document}

\title{Degree of $h$-polynomials of edge ideals}



\author[Biermann]{Jennifer Biermann} 
\address[J.~Biermann]{Department of Mathematics and Computer Science, Hobart and William Smith Colleges, 300 Pulteney
St. Geneva, NY 14456, USA}
\email{\textcolor{blue}{\href{mailto:biermann@hws.edu}{biermann@hws.edu}}}

\author[Kara]{Selvi Kara} 
\address[S.~Kara]{Department of Mathematics, Bryn Mawr College, Bryn Mawr, PA 19010}
\email{\textcolor{blue}{\href{mailto:skara@brynmawr.edu}{skara@brynmawr.edu}}}

\author[O'Keefe]{Augustine O'Keefe} 
\address[A.~O'Keefe]{Department of Mathematics and Statistics, Connecticut College, 270 Mohegan Avenue Pkwy, New
London, CT 06320, USA}
\email{\textcolor{blue}{\href{mailto:aokeefe@conncoll.edu}{aokeefe@conncoll.edu}}}

\author[Skelton]{Joseph Skelton} 
\address[J.~Skelton]{School of Mathematical and Statistical Sciences, Clemson University, Martin Hall, Clemson, SC 29634, USA}
\email{\textcolor{blue}{\href{mailto:jwskelt@clemson.edu }{jwskelt@clemson.edu }}}

\author[Sosa Castillo]{Gabriel Sosa Castillo} 
\address[G.~Sosa Castillo]{Department of Mathematics, Colgate University,
Hamilton, NY, 13346, USA}
\email{\textcolor{blue}{\href{mailto:gsosacastillo@colgate.edu}{gsosacastillo@colgate.edu}}}

\begin{abstract}
In this paper, we investigate the degree of $h$-polynomials of edge ideals of finite simple graphs. In particular, we provide combinatorial formulas for the degree of the $h$-polynomial for various fundamental classes of graphs such as paths, cycles, and bipartite graphs. To the best of our knowledge, this marks the first investigation into the combinatorial interpretation of this algebraic invariant. Additionally, we characterize all connected graphs in which the sum of the Castelnuovo-Mumford regularity and the degree of the  $h$-polynomial of an edge ideal reaches its maximum value, which is the number of vertices in the graph.

\end{abstract}

\maketitle

\section{Introduction}

Edge ideals of graphs provide a fertile ground for uncovering combinatorial expressions for algebraic invariants of monomial ideals, thereby making abstract algebraic concepts more accessible. There has thus been extensive research into expressing  or bounding fundamental algebraic invariants—such as the Castelnuovo-Mumford regularity and projective dimension —in terms of graph invariants (see, for example, \citep{ProjDim, ha2008monomial,katzman2006characteristic, RegMatch}). In this paper, we continue this trend by investigating  the degree of the $h$-polynomial of edge ideals, a fundamental yet less explored algebraic invariant.

Let \( G = (V(G), E(G)) \) be a finite simple graph with vertex set \( V(G) = \{x_1, \ldots, x_n\} \) and edge set \( E(G) \). The edge ideal of \( G \), denoted by \( I(G) \), is defined as the monomial ideal
\[ I(G) = (x_i x_j \mid \{x_i, x_j\} \in E(G)) \subseteq R = k[x_1, \ldots, x_n]. \]
The \( h \)-polynomial of \( R/I(G) \), denoted by \( h_{R/I(G)}(t) \), is  related to the Hilbert series.  It is well-known that Hilbert series of \( R/I(G) \) can be expressed as a rational function  (see \citep[Theorem 13.2]{matsumura1987}) and the numerator of this (reduced) rational function is called the \( h \)-polynomial of \( R/I(G) \).

There has been recent interest in studying  the degree of \( h_{R/I(G)}(t) \), denoted by $\deg h_{R/I(G)}(t)$, in relation to fundamental invariants of \( R/I(G) \) such as the dimension, projective dimension, depth, and regularity. The recent research has focused on determining the possible values of these invariants for particular classes of graphs \citep{hibi2021homological, hibi2021regularity} and the existence of a graph \( G \) for given values of these invariants \citep{IndMatch}. Additionally, researchers have studied the relationships between the degree of the \( h \)-polynomial and these invariants \citep{hibi2019regularity}. Moreover, similar studies have been undertaken for  monomial ideals in \citep{hibi2018regularity} and toric ideals of graphs in \citep{favacchio2020regularity}.

In this paper, we take a different approach by solely focusing on computing the degree of the \( h \)-polynomial for edge ideals. To the best of our knowledge, our work is the first in-depth study of this  invariant. Our primary goal here is to provide formulas for \(\deg h_{R/I(G)}(t)\) based on the combinatorial data of \( G \) for various graph classes. In particular, we are interested in expressing \(\deg h_{R/I(G)}(t)\) in terms of the independence number of a graph.

\begin{remark}\label{rmk:deg_alpha}
The relationship between the degree of the Hilbert polynomial and combinatorial invariants of the graph arises from the following expression for the Hilbert series of \( R/I(G) \):
\[
H_{R/I(G)}(t) = \sum_{i=0}^{\alpha(G)} \frac{f_{i-1} t^i}{(1-t)^i}
\]
where $H_{R/I(G)}(t)$ denotes the Hilbert series of $R/I(G)$, \( f_{i-1} \) denotes the number of independent sets of cardinality \( i \) in \( G \) (by convention $f_{-1} = 1$), and \( \alpha(G) \) represents the size of the largest independent set in \( G \) (see \citep[Theorem 6.2.1]{herzog2011monomial}). It follows from this expression that 
    \[
    \deg h_{R/I(G)}(t) \leq \alpha(G).
    \]
\end{remark}
In light of this upper bound on the degree, one of our objectives  is to identify classes of graphs for which \(\deg h_{R/I(G)}(t) = \alpha(G)\).

In this paper, we provide explicit formulas for the degree of the $h$-polynomial for paths, cycles and complete bipartite graphs in terms of the independence number. In addition, we provide a classification of connected bipartite graphs where  $\deg h_{R/I(G)}(t)  = \alpha(G)$. We now list the main results of this paper.

\begin{reptheorem}{thm:path_cycle}
Let $C_n$ denote the cycle on $n$-vertices and $P_n$ denote the path on $n$-vertices. 
\begin{enumerate}
    \item The degree of the $h$-polynomial of $R/I(C_n)$ for $n\geq 3$ is 
$$  \deg h_{R/I(C_n)}(t)  = \alpha(C_n)$$
where $\displaystyle \alpha(C_n) =\left\lfloor \frac{n}{2} \right\rfloor$.
    \item The degree of the $h$-polynomial of $R/I(P_n)$ for $n\geq 1$ is 
$$ \deg h_{R/I(P_n)}(t) = \begin{cases}
        \alpha(P_n) &: n \equiv  0,2 \pmod 3,\\
       \alpha(P_n)-1 &: n \equiv 1 \pmod 3
    \end{cases}$$
    where $ \displaystyle \alpha(P_n)= \left\lceil \frac{n}{2} \right\rceil$.
\end{enumerate}
\end{reptheorem}

Our result on bipartite graphs is presented based on notation introduced in Section \ref{sec:bipartite}. To make the statement of our result more accessible, we briefly introduce some necessary notation here, and refer the reader to Section \ref{sec:bipartite} for details.

\begin{reptheorem}{thm:bipartite}
 Let $G$ be a connected bipartite graph with bipartition $(U,V)$ such that $|U|\geq |V|$.  Let $V_W$ denote the collection of whiskered vertices (i.e. vertices adjacent to a leaf) in $V$ and $\overline{V_W}$ denote the complement of $V_W$ in $V.$ Similarly, let $U_L$ be a set of leaf vertices in $U$ with the property that there is exactly one element adjacent to each vertex of $V_W$ and $\overline{U_L}$ be the complement of $U_L$ in $U.$ 
\begin{enumerate}[a]
        \item If every vertex in $V$ is whiskered,  then  $\deg h_{R/I(G)} (t)= \alpha(G)=|U|$.
        \item If there exist vertices in $V$ that are not whiskered, then  $\deg h_{R/I(G)} (t)= \alpha(G)$ if and only if  the number of even cardinality non-empty subsets of $\overline{U_L}$ whose set of neighbors is exactly $\overline{V_W}$ is not equal to the number of odd cardinality subsets of $\overline{U_L}$ with the same property. 
    \end{enumerate}
\end{reptheorem}

A nice consequence of \Cref{thm:bipartite} is that the degree of a complete bipartite graph is $\alpha(G)$.

The other focus of this paper is to study the relationship between the degree of the $h$-polynomial and \( \reg R/I(G) \), the regularity of \( R/I(G) \). Our interest stems from the following inequality \citep[Theorem 13]{hibi2019regularity}:
\[
\reg R/I(G) + \deg h_{R/I(G)}(t) \leq |V(G)|.
\]
In particular, we are interested in determining when this bound is sharp. One of the main results of this paper, stated below, characterizes all connected finite simple graphs for which the equality is achieved.

\begin{reptheorem}{thm:reg_deg=n}
  Let $G$ be a connected graph. Then $$\reg R/I(G) + \deg h_{R/I(G)}(t) =|V(G)|$$  if and only if $G$ is  a Cameron-Walker graph with no pendant triangles.    
\end{reptheorem}

The structure of the paper is as follows: We collect the necessary background on graph theory and algebra in \Cref{sec:prelim}. In \Cref{sec:max_degree}, we present a graph theoretical (necessary and sufficient) condition for \(\deg h_{R/I(G)}(t)  = \alpha(G)\). We provide formulas for \(\deg h_{R/I(G)}(t)\) for paths and cycles in \Cref{sec:path_cycles}. \Cref{sec:bipartite} is dedicated to classifying bipartite graphs where \(\deg h_{R/I(G)}(t) = \alpha(G)\). Finally, in \Cref{sec:classification}, we identify all connected graphs \( G \) for which the sum of the degree of the \( h \)-polynomial and the regularity is equal to \(|V(G)|\).

\begin{remark}
The term degree is an overloaded one within mathematics. Throughout this paper, we may refer to the degree of the $h$-polynomial of $R/I(G)$ as the degree of $R/I(G)$. 
\end{remark}

\section{Preliminaries}\label{sec:prelim}

In this section, we collect fundamental definitions and results that are relevant to this paper. A reader who is familiar with this background material can skip this section. We start with the algebraic concepts and then introduce the related notions from graph theory.

\begin{defn}
    The \textbf{Hilbert function} of the graded ring $R/I(G)$, $HF_{R/I(G)}:\mathbb{Z}_{\geq 0}\rightarrow \mathbb{Z}_{\geq 0},$ is defined by 
    \[
        HF_{R/I(G)}(i)=\dim_k(R/I(G))_i
    \]
    where $\dim_k(R/I(G))_i$ is the $k$-vector space dimension of the $i^\text{th}$ graded piece of $R/I(G)$. 
    
    The \textbf{Hilbert series} of $R/I(G)$, denoted by $H_{R/I(G)}(t)$, is defined to be
\[ H_{R/I(G)}(t) = \sum_{i \in \mathbb{Z}} HF_{R/I(G)}(i) t^i. \]
\end{defn}

Via the following well-known theorem we see that we can compute the Hilbert function of $R/I(G)$ by counting \emph{standard monomials}, i.e. those monomials not in $\init_<I(G)=I(G)$. 

\begin{prop}\citetext{\citealp[Proposition 1.1]{Sturmfels1996}} 
Let $I$ be an ideal in the polynomial ring $R=k[\mathbf{x}]$. The standard monomials of $I$ form a $k$-vector space basis for $R/I.$
\end{prop}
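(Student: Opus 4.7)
The plan is to establish both the spanning property and linear independence of standard monomials in $R/I$ using the multivariate division algorithm against a Gr\"obner basis of $I$.

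First, I would fix the underlying monomial order $<$ used to define $\init_< I$ (so the standard monomials are, by definition, the monomials of $R$ that do not lie in $\init_< I$) and choose a Gr\"obner basis $\mathcal{G} = \{g_1, \ldots, g_s\}$ of $I$ with respect to $<$. By the defining property of a Gr\"obner basis, $\init_< I = (\init_< g_1, \ldots, \init_< g_s)$.

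For spanning, I would apply the multivariate division algorithm to an arbitrary $f \in R$: dividing by $\mathcal{G}$ yields a decomposition $f = \sum_{i=1}^s h_i g_i + r$, where $r$ is a $k$-linear combination of monomials, none of which is divisible by any $\init_< g_i$. Since $\init_< I$ is generated by the $\init_< g_i$, every monomial appearing in $r$ is a standard monomial. Because $\sum h_i g_i \in I$, the images of $f$ and $r$ in $R/I$ agree, so the standard monomials $k$-span $R/I$.

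For linear independence, I would argue by contradiction. Suppose some finite $k$-linear combination $f = \sum_i c_i m_i$ of distinct standard monomials $m_i$ lies in $I$ with not all $c_i$ zero. Then $f$ is a nonzero element of $I$, so $\init_< f \in \init_< I$. But $\init_< f$ equals (a scalar multiple of) one of the $m_i$, which is standard by assumption, contradicting $m_i \notin \init_< I$. Hence all $c_i = 0$.

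The one delicate point, and the main reason a Gr\"obner basis rather than an arbitrary generating set is indispensable, is that the remainder $r$ output by the division algorithm is only guaranteed to be $k$-supported on standard monomials when $\init_< I$ is actually generated by the $\init_< g_i$. Without this, the remainder could still contain monomials divisible by some leading term of an element of $I$ not visible from the $\init_< g_i$, which would break the standardness of $r$ and thus both halves of the argument. Verifying that the remainder has the claimed form is therefore where most of the technical content of the proof resides; once that is in hand the spanning/independence dichotomy is essentially a single application each of the division algorithm and of the fact that $\init_<$ is compatible with $k$-linear combinations of monomials with distinct leading terms.
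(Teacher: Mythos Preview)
Your argument is correct and is the standard Gr\"obner-basis proof of this classical result. Note, however, that the paper does not supply its own proof of this proposition: it is stated in the preliminaries with a citation to Sturmfels and used as a black box, so there is no in-paper proof to compare against. Your approach is exactly the one found in Sturmfels' text.
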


\begin{defn}
 A graded minimal free resolution of $R/I(G)$ is an exact sequence of the form
$$
0\longrightarrow \bigoplus_{j \in \mathbb{Z}} R(-j)^{\beta_{p,j}} \longrightarrow \bigoplus_{j \in \mathbb{Z}} R(-j)^{\beta_{p-1,j}} \longrightarrow \cdots \longrightarrow 
\bigoplus_{j \in \mathbb{Z}} R(-j)^{\beta_{0,j}} \longrightarrow R/I(G) \longrightarrow 0.
$$
 The exponents $\beta_{i,j} = \beta_{i,j}(R/I(G))$ are invariants of $R/I(G)$ called the graded Betti numbers of $R/I(G)$.  The Castelnuovo-Mumford regularity, denoted by $\reg (R/I(G))$, is  defined in terms of the non-zero graded Betti numbers of $R/I(G)$:
 \[
    \reg (R/I(G)) = \max \{ j-i : \beta_{i,j} (R/I(G)) \neq 0\}.
 \] 
\end{defn}

  One can bound the regularity of $R/I(G)$ in terms of combinatorial invariants of $G$. In what follows, we recall such graph invariants. 

  \begin{defn}
     A \emph{matching} of $G$ is a collection of pairwise disjoint edges of $G$.  The \emph{matching number} of $G$ is the maximum size of a matching of $G$ and it is denoted by $\mu(G)$.  
     
     A matching $\mathcal{M}$ is called an \emph{induced matching} if the set of edges of the induced graph on the vertices of $\mathcal{M}$  is equal to $\mathcal{M}$.  The \emph{induced matching number} of $G$ is the maximum size of an induced matching of $G$ and it is denoted by $\nu(G)$.

     It is immediate that $\nu(G)\leq \mu(G)$. Graphs for which $\nu(G)=\mu(G)$ are called \emph{Cameron-Walker graphs}.
\end{defn}

\begin{remark}
    From \citetext{\citealp[Lemma 2.2]{katzman2006characteristic}} and \citetext{\citealp[Theorem 6.7]{ha2008monomial}}  one has
    $$\nu(G) \leq \reg (R/I(G)) \leq \mu(G)$$
    for any finite simple graph $G$.
\end{remark}

A characterization of graphs where $\reg (R/I(G)) = \mu(G)$ is given in  \citep{RegMatch} and we recall its statement below. 

\begin{thm}\label{thm:reg_match}\citetext{\citealp[Theorem 11]{RegMatch}}
 Let $G$ be a connected graph. Then $\reg R/I(G) =\mu(G)$ if and only if $G$ is a Cameron-Walker graph or $G$ is a 5-cycle.   
\end{thm}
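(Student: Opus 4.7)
My plan is to prove the characterization by handling the two directions separately, with the bulk of the work in the converse.

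\textbf{The ``if'' direction.} If $G$ is Cameron--Walker, then $\nu(G)=\mu(G)$ by definition, and the sandwich $\nu(G)\le\reg R/I(G)\le\mu(G)$ immediately forces equality. If $G=C_5$, one verifies $\reg R/I(C_5)=2=\mu(C_5)$ directly; the cleanest route is via Hochster's formula, since the independence complex of $C_5$ is again a $5$-cycle, hence homotopy equivalent to $S^1$, so $\beta_{3,5}(R/I(C_5))\ne 0$ and $\reg R/I(C_5)=2$.

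\textbf{The ``only if'' direction.} Suppose $\reg R/I(G)=\mu(G)$. I would argue by contradiction and induction on $|V(G)|$: assume $G$ is neither Cameron--Walker nor $C_5$; the goal is to establish $\reg R/I(G)\le\mu(G)-1$. The workhorse is the graded short exact sequence
\[
0\longrightarrow \bigl(R/(I(G):x)\bigr)(-1)\xrightarrow{\;\cdot\,x\;} R/I(G)\longrightarrow R/(I(G),x)\longrightarrow 0,
\]
which yields
\[
\reg R/I(G)\le\max\bigl\{\reg R/(I(G):x)+1,\ \reg R/(I(G),x)\bigr\}.
\]
Here $R/(I(G),x)$ corresponds to $I(G\setminus x)$ in a smaller polynomial ring and $R/(I(G):x)$ to $I(G\setminus N[x])$ modulo the neighbors of $x$. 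Using the general bound $\reg R/I(H)\le\mu(H)$ on both subgraphs (or the inductive hypothesis, which is stronger), it suffices to locate a vertex $x$ satisfying $\mu(G\setminus x)\le\mu(G)-1$ and $\mu(G\setminus N[x])\le\mu(G)-2$.

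\textbf{Locating $x$.} Since $\nu(G)<\mu(G)$, every maximum matching $M=\{e_1,\ldots,e_\mu\}$ contains two edges $e_i,e_j$ joined by a ``chord'' edge $f\in E(G)\setminus M$. Around this chord, I would choose $x$ to be the endpoint of $e_i$ incident to $f$, so that $N[x]$ absorbs both endpoints of $e_i$ together with at least one endpoint of $e_j$, delivering the desired matching drops. A careful case analysis then handles the various local configurations: multiple chords sharing a vertex, pendant triangles formed by $\{e_i,e_j,f\}$, leaves attached to $x$ or its neighbors, and interactions with vertices left unmatched by $M$.

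\textbf{The main obstacle.} The difficulty lies entirely in the case analysis above: one must show that in every local configuration a vertex $x$ achieves both matching drops simultaneously, \emph{except} in the rigid situation where the chord structure collapses $G$ into $C_5$. This exceptional configuration must be handled by a direct small-graph check, since subgraphs of $C_5$ such as $P_4$ achieve $\mu=2$ with $\reg=1$ and thus do not themselves violate the inductive hypothesis. Reconciling this boundary behavior with the generic inductive step is the crux of the proof.
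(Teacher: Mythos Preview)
The paper does not prove this theorem at all: it is quoted as a preliminary result from the literature (\citep[Theorem 11]{RegMatch}) and is used as a black box in the proof of \Cref{thm:reg_deg=n}. There is therefore no ``paper's own proof'' to compare your proposal against.

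That said, a brief assessment of your sketch on its merits: the ``if'' direction is correct and complete. For the ``only if'' direction, the short-exact-sequence/deletion framework is a reasonable scaffold, but the proposal is really an outline rather than a proof. The entire content lies in the claim that whenever $G$ is connected, not Cameron--Walker, and not $C_5$, one can locate a vertex $x$ with $\mu(G\setminus x)\le\mu(G)-1$ and $\mu(G\setminus N[x])\le\mu(G)-2$. You acknowledge this is the ``main obstacle'' but do not actually execute the case analysis; in particular, your description of choosing $x$ as the endpoint of $e_i$ incident to the chord $f$ does not by itself guarantee $\mu(G\setminus N[x])\le\mu(G)-2$, since the other endpoint of $e_j$ may still be matched in $G\setminus N[x]$ via a different edge. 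One also has to be careful that the subgraphs $G\setminus x$ and $G\setminus N[x]$ need not be connected, so the inductive hypothesis must be applied componentwise, and components that are themselves $C_5$ or Cameron--Walker must be tracked. The original proof in \citep{RegMatch} carries out a substantial structural argument here; your sketch identifies the right shape but leaves the genuine work undone.
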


The main graph invariant that is relevant to our discussion of the degree of the $h$-polynomial is the independence number. We therefore recall the notion of independent sets and independence number of a graph here.

\begin{defn}
   Let $G$ be a finite simple graph. A collection of vertices $\mathcal{C}\subseteq V(G)$ is called an \textbf{independent set} of $G$ if $\{x,y\} \notin E(G)$ for any $x,y \in \mathcal{C}$. The \textbf{independence number} of  $G$ is the maximum size of an independent set and it is denoted by $\alpha(G)$. When $G$ is understood we will simply denote $\alpha(G)$ by $\alpha$. 
\end{defn}

The following result which provides an upper bound for the sum of the matching number and independence number of a graph follows from one of the Gallai identities (see for example \citep[Lemma 1.0.1]{LovaszPlummer1986}). 

\begin{thm}\label{thm:konig}
    Let $G$ be a  graph on $n$ vertices. Then $\alpha(G)+\mu(G) \leq n$.
\end{thm}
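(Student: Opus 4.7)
The plan is to give a direct double-counting argument: compare a maximum independent set against a maximum matching, and show that every matching edge forces at least one vertex outside the independent set.

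First I would fix a maximum independent set $S \subseteq V(G)$ of size $\alpha(G)$ and a maximum matching $M \subseteq E(G)$ of size $\mu(G)$. By definition of a matching, the $\mu(G)$ edges of $M$ are pairwise vertex-disjoint, so they use $2\mu(G)$ distinct vertices, which I will group by edge.

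Next I would use the independence of $S$: for each edge $e = \{u,v\} \in M$, at least one of $u,v$ lies outside $S$, since otherwise $\{u,v\}$ would be an edge with both endpoints in the independent set $S$. Choosing one such ``outside'' endpoint per matching edge yields $\mu(G)$ distinct vertices of $V(G) \setminus S$, distinctness being automatic because the edges of $M$ are vertex-disjoint. Therefore $|V(G) \setminus S| \geq \mu(G)$, which rearranges to $\alpha(G) + \mu(G) \leq n$.

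There is no real obstacle here; this is a clean counting argument. An alternative route, closer to the Gallai identity mentioned in the excerpt, would be to introduce the vertex cover number $\tau(G)$, observe that $V(G) \setminus S$ is a vertex cover so $\tau(G) \leq n - \alpha(G)$, and combine with the weak duality $\mu(G) \leq \tau(G)$ (since any vertex cover must use at least one endpoint per matching edge). Both arguments require only elementary graph theory, and I would favor the direct version above since it avoids invoking $\tau(G)$ explicitly.
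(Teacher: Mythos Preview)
Your argument is correct. The paper itself does not supply a proof of this statement at all; it simply records the inequality as a consequence of one of the Gallai identities and cites \citep[Lemma 1.0.1]{LovaszPlummer1986}. Your direct counting argument is a clean self-contained proof: fixing a maximum independent set $S$ and a maximum matching $M$, each edge of $M$ contributes at least one endpoint to $V(G)\setminus S$, and vertex-disjointness of $M$ makes these endpoints distinct, so $n-\alpha(G)\geq \mu(G)$.

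The alternative you sketch via the vertex cover number is in fact the route implicit in the paper's citation: the Gallai identity $\alpha(G)+\tau(G)=n$ (complements of independent sets are exactly vertex covers) combined with the weak duality $\mu(G)\leq \tau(G)$ gives the inequality immediately. Your direct version simply fuses these two steps into one and avoids naming $\tau(G)$, which is arguably cleaner for a reader who only needs the inequality and not the full Gallai framework.
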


    The final graph theory concepts we recall are the neighbors of a vertex and the notion of leaves.
\begin{defn}
    Let $G$ be a finite simple graph and $u, v \in V(G)$. If $\{u,v\}\in E(G)$, we say $u$ and $v$ are \textbf{neighbors} in $G$. The set of all neighbors of a vertex $u$, denoted $N(u),$ is called its \textbf{neighborhood}, ie. $N(u)=\{v\in V(G)\,:\,\{u,v\}\in E(G)\}$.
    
    A vertex with only one neighbor in $G$ is called a \textbf{leaf vertex}, or merely \textbf{leaf}. A neighbor of a leaf is said to be \textbf{whiskered}.
\end{defn}

Lastly, we present the following combinatorial identity which will be useful in \Cref{sec:max_degree} for determining when \(\deg h_{R/I(G)}(t) = \alpha(G)\). Its proof is provided for completeness.

\begin{lem}\label{importantcombination}
Let $k$ and $d$ be non-negative integers such that  $d\geq 1$. Then
\begin{equation}\label{binomidentity}
\binom{d-1}{k}=\sum_{i=0}^k (-1)^i\binom{d+k-i}{k-i}\binom{k+1}{i}.
\end{equation}

\end{lem}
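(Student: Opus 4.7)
The plan is to prove the identity by extracting the coefficient of $t^k$ from the two sides of the elementary identity of formal power series
\begin{equation*}
(1-t)^{k+1}\cdot (1-t)^{-(d+1)} = (1-t)^{k-d}.
\end{equation*}

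First I would compute $[t^k]$ on the right side. If $d > k$, then $k - d = -(d-k)$ with $d-k \geq 1$, and Newton's binomial series gives
\begin{equation*}
(1-t)^{-(d-k)} = \sum_{n\geq 0}\binom{d-k-1+n}{n}t^n,
\end{equation*}
so the coefficient of $t^k$ is $\binom{d-1}{k}$. If $d\leq k$, then $(1-t)^{k-d}$ is a polynomial of degree $k-d<k$, so $[t^k](1-t)^{k-d}=0$; but this agrees with $\binom{d-1}{k}=0$, which holds because $k \geq d > d - 1$. In either case, the right-hand side contributes $\binom{d-1}{k}$.

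Next I would compute $[t^k]$ on the left side by expanding both factors. Writing
\begin{equation*}
(1-t)^{k+1} = \sum_{i=0}^{k+1}(-1)^i\binom{k+1}{i}t^i, \qquad (1-t)^{-(d+1)} = \sum_{j\geq 0}\binom{d+j}{j}t^j,
\end{equation*}
and taking the Cauchy product, the coefficient of $t^k$ is
\begin{equation*}
\sum_{i+j=k,\; i,j\geq 0}(-1)^i\binom{k+1}{i}\binom{d+j}{j} = \sum_{i=0}^{k}(-1)^i\binom{k+1}{i}\binom{d+k-i}{k-i},
\end{equation*}
where the range $0\leq i\leq k$ comes from the constraint $j=k-i\geq 0$ (and $i\leq k<k+1$, so no other boundary is hit). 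Equating the two expressions for $[t^k]$ yields the identity \eqref{binomidentity}.

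This approach has no real obstacle; the only subtlety worth highlighting in the writeup is the dichotomy $d>k$ versus $d\leq k$ on the right-hand side, which needs a brief comment so that the manipulation is valid for all admissible $(d,k)$. (Alternative routes, such as induction on $k$ using Pascal's rule or a sign-reversing involution, are available but would require more bookkeeping than this one-line generating function computation.)
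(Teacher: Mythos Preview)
Your proof is correct, but it takes a genuinely different route from the paper's. The paper proves the identity by a combinatorial double count: both sides of \eqref{binomidentity} enumerate the positive integer solutions of $x_1+\cdots+x_{k+1}=d$. The left side is the standard stars-and-bars count $\binom{d-1}{k}$, while the right side arises by starting from the $\binom{d+k}{k}$ non-negative solutions and applying inclusion--exclusion over the events ``$x_j=0$''; choosing $i$ variables to force to zero and counting non-negative solutions in the remaining $k+1-i$ variables gives $\binom{k+1}{i}\binom{d+k-i}{k-i}$, and the alternating sum corrects the overcount.

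Your generating-function argument is shorter and entirely self-contained: equating $[t^k]$ on both sides of $(1-t)^{k+1}(1-t)^{-(d+1)}=(1-t)^{k-d}$ is essentially a one-line computation, and your handling of the $d\leq k$ boundary case is clean. The paper's bijective argument, on the other hand, explains \emph{why} the identity holds at the level of counted objects and makes the appearance of $\binom{k+1}{i}$ combinatorially transparent. Either approach is perfectly adequate here; yours is more efficient, theirs more illustrative.
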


\begin{proof}
We count the positive integer solutions to the equation
\begin{equation}\label{integersolutions} \hskip0.5cm x_1+x_2+\dots +x_{k+1}=d
\end{equation}
in two ways.
On one hand, it is straightforward to see that there are $\binom{d-1}{k}$  many positive integer solutions to Equation \ref{integersolutions}.  On the other hand, one can also count the non-negative integer solutions and remove those which contain zeros using an inclusion/exclusion argument.  First note that there are $\binom{d+k}{k}\binom{k+1}{0}$ many non-negative integer solutions to \Cref{integersolutions}. For $0<i\leq k$, a naive way to count the number of non-negative solutions to  \Cref{integersolutions} with at least $i$ of the variables being $0$ would be to first choose $i$ of the variables to be zero and then the number of non-negative integer solutions to the equation in the remaining $k+1-i$ variables is $\binom{d+k-i}{k-i}$.  This results in the following expression
\[
    \binom{d+k-i}{k-i}\binom{k+1}{i}
\]
which over-counts those solutions that have more than $i$ variables valued zero. Applying the Principle of Inclusion-Exclusion to deal with the overlap leads to the alternating sum on the right side of \Cref{binomidentity}.\qedhere

\end{proof}

\section{Maximum Degree via Independent Sets}\label{sec:max_degree}

In this section, we analyze when the degree of the $h$-polynomial of $R/I(G)$ achieves its maximum value $\alpha(G)$, the independence number of $G$. We adopt the following notation for the remainder of this paper.

\begin{notation}\label{notationforHS} 
We use the notation $g_j(G)=g_j$ to denote the number of independent sets of size $j$ in a  graph  $G$ for $1\leq j \leq \alpha(G)$. Equivalently,
$$g_j = g_j(G)=|\{ \mathcal{C} \subseteq V(G) :  \mathcal{C} \text{ is an independent set of } G \text{ and } |\mathcal{C}|=j\}|$$
Let $g(G)=g$ denote the alternating sum of $g_1, g_2, \ldots, g_{\alpha(G)}$ where
 $$g=\sum_{j=1}^{\alpha(G)} (-1)^{j-1} g_j.$$
\end{notation}

\begin{thm}\label{Thm:alternate_sum} 
Let $G$ be a finite simple graph. Then $\deg h_{R/I(G)}(t)=\alpha(G) $ if and only if $g\neq 1$.
\end{thm}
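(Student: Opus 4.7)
The plan is to make the expression for the Hilbert series from Remark \ref{rmk:deg_alpha} explicit by combining it over a common denominator, check that the resulting numerator is already in lowest terms, and then read off its leading coefficient.

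Setting $\alpha = \alpha(G)$ and using the convention $g_0 = f_{-1} = 1$, placing every summand in Remark \ref{rmk:deg_alpha} over the common denominator $(1-t)^{\alpha}$ yields
$$H_{R/I(G)}(t) \;=\; \frac{p(t)}{(1-t)^{\alpha}}, \qquad p(t) \;:=\; \sum_{i=0}^{\alpha} g_i\, t^i(1-t)^{\alpha-i},$$
where $p(t)$ is a polynomial of degree at most $\alpha$. The next step is to verify that $p(t)$ is already the reduced $h$-polynomial of $R/I(G)$: evaluating at $t=1$ kills every term except $i=\alpha$, giving $p(1) = g_\alpha > 0$, so $(1-t)\nmid p(t)$. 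Because the Krull dimension of $R/I(G)$ is exactly $\alpha$ (it is the Stanley--Reisner ring of the independence complex), this means $p(t) = h_{R/I(G)}(t)$. Consequently $\deg h_{R/I(G)}(t) = \alpha$ if and only if the coefficient of $t^\alpha$ in $p(t)$ is nonzero.

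The remaining step is a short binomial-expansion computation: the top-degree term of $(1-t)^{\alpha-i}$ contributes $(-1)^{\alpha-i}$, so
$$[t^\alpha]\,p(t) \;=\; \sum_{i=0}^{\alpha}(-1)^{\alpha-i}g_i \;=\; (-1)^{\alpha}\Bigl(1 + \sum_{i=1}^{\alpha}(-1)^{i}g_i\Bigr) \;=\; (-1)^{\alpha}\bigl(1 - g\bigr),$$
using the definition $g = \sum_{j=1}^{\alpha}(-1)^{j-1}g_j$. Hence $[t^\alpha]\,p(t) \neq 0$ if and only if $g \neq 1$, which completes the equivalence.

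The only real subtlety is that \emph{a priori} the reduced $h$-polynomial could have strictly smaller degree than $p(t)$ if $(1-t)$ divided $p(t)$; ruling this out via the observation $p(1) = g_\alpha > 0$ is what makes the leading-coefficient computation meaningful. Everything else is routine bookkeeping with the binomial expansion of $(1-t)^{\alpha-i}$, so I do not anticipate needing the combinatorial identity of Lemma \ref{importantcombination} for this particular statement (it will presumably be used to manipulate lower-order coefficients in later arguments).
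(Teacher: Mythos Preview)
Your proof is correct and takes a genuinely more direct route than the paper's. Both arguments clear denominators in the face-enumeration form of the Hilbert series to obtain $H_{R/I(G)}(t)=p(t)/(1-t)^{\alpha}$ with $p(1)=g_{\alpha}\neq 0$, so $p(t)=h_{R/I(G)}(t)$. The difference is in how each analyzes $p(t)$: you expand in powers of $t$ and read off $[t^{\alpha}]\,p(t)=(-1)^{\alpha}(1-g)$ in one line, whereas the paper changes basis to powers of $(1-t)$, introducing auxiliary coefficients $D_s$ (Equation~\eqref{eq:D_s}) and invoking the combinatorial identity of Lemma~\ref{importantcombination} to carry out the change of basis; it then identifies the coefficient of $(1-t)^{\alpha}$ as $1-\sum_s D_s=1-g$. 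Your approach bypasses Lemma~\ref{importantcombination} entirely for this statement. What the paper's longer route buys is the explicit $(1-t)$-adic expansion of $p(t)$, which is reused immediately in Corollary~\ref{DegreeintermsofD} to locate $\deg h_{R/I(G)}(t)$ exactly when $g=1$, and then again in the path computation (Lemma~\ref{D0Values} and Theorem~\ref{thm:path_cycle}); so the extra machinery is not wasted, it is just not needed for Theorem~\ref{Thm:alternate_sum} itself.
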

\begin{proof}
 Let  $\alpha=\alpha(G)$.  First, observe that  the support of a standard monomial of $R/I(G)$ coincides with an independent set of $G$. In particular, for an independent set $\mathcal{C}$ of size $j$, there are $ \displaystyle\binom{d-1}{j-1}$ many standard monomials of degree $d$ with support  $\mathcal{C}$. Thus,  
 the Hilbert series of $R/I(G)$ can be expressed as
\begin{equation}\label{eq:HS_1}
    H_{R/I(G)} (t)= 1+\sum_{d=1}^{\infty} \text{HF}_{R/ I(G)}(d)t^d= 1+\sum_{d=1}^{\infty}\left(\sum_{j=1}^{\alpha} g_j\binom{d-1}{j-1} \right)t^d.
\end{equation} 
\textbf{Claim:} The Hilbert series can be re-written as follows
\begin{equation}\label{eq:HS_2}
H_{R/I(G)} (t) =\frac{\left(1-\sum_{s=0}^{\alpha-1}D_s\right)(1-t)^{\alpha}+\sum_{s=0}^{\alpha-1}D_s(1-t)^{\alpha-s-1}}{(1-t)^{\alpha}}
\end{equation}
 where 
 \begin{equation}\label{eq:D_s} D_s=\sum_{j=s+1}^{\alpha} (-1)^{j-1-s}g_j\binom{j}{j-1-s}, ~~ 0\leq s\leq \alpha-1.
 \end{equation}

 Observe that the constant term of the numerator in \Cref{eq:HS_2} as a polynomial in $(1-t)$ is nonzero, namely $D_{\alpha-1}= g_{\alpha}\neq0$. Therefore, if the claim is true,  $\deg h_{R/I(G)}(t)=\alpha$ if and only if $1-\sum_{s=0}^{\alpha-1}D_s\neq 0.$ We can then rewrite $\sum_{s=0}^{\alpha-1}D_s$ as follows by first reversing the order of summation and then applying a consequence of the Binomial Theorem, namely that $\sum_{s=0}^{j}(-1)^s\binom{j}{s}= 0$.
\begin{align*}
\sum_{s=0}^{\alpha-1}D_s&=\sum_{s=0}^{\alpha-1}\left[\sum_{j=s+1}^{\alpha} (-1)^{j-1-s}g_j\binom{j}{j-1-s}\right]\\
& =\sum_{j=1}^{\alpha}g_j \left[ \sum_{s=0}^{j-1}(-1)^s\binom{j}{s} \right]\\
&=\sum_{j=1}^{\alpha} (-1)^{j-1}g_j=g.\end{align*}
 We then have that $\deg h_{R/I(G)}(t)=\alpha(G)$ whenever $g\neq 1$, thus proving the lemma.

\textbf{Proof of Claim:} To see that (\ref{eq:HS_2}) follows from from (\ref{eq:HS_1}) we use Lemma \ref{importantcombination} to break the binomial factor $\binom{d-1}{j-1}$ into a summation to obtain
$$H_{R/I(G)}(t)= 1+\sum_{d=1}^\infty\left(\sum_{j=1}^\alpha\left(\sum_{i=0}^{j-1} (-1)^i g_j\binom{d+j-1-i}{j-1-i}\binom{j}{i}\right) t^d\right).$$

We then substitute $s = j-1-i$ to get

\begin{align*}
H_{R/I(G)}(t) &= 1+ \sum_{d=1}^\infty \left(\sum_{j=1}^\alpha \left ( \sum_{s = 0}^{j-1} (-1)^{j-1-s}g_j \binom{d+s}{s}\binom{j}{j-1-s} \right )t^d \right)\\
&= 1+\sum_{j=1}^\alpha \left( \sum_{s=0}^{j-1} \left( (-1)^{j-1-s} g_j \binom{j}{j-1-s}\sum_{d=1}^\infty \binom{d+s}{s} t^d \right) \right) .
\end{align*}

Reversing the order of the summations in $j$ and $s$ yields
\begin{align*}
H_{R/I(G)}(t) &= 1+\sum_{s=0}^{\alpha-1}\left(\sum_{j=s+1}^{\alpha}\left( (-1)^{j-1-s}g_j\binom{j}{j-1-s} \sum_{d=1}^\infty\binom{d+s}{s}t^d\right)\right)\\
 &= 1+\sum_{s=0}^{\alpha-1}\left( \left(\sum_{d=1}^\infty\binom{d+s}{s}t^d\right) \cdot \left(\sum_{j=s+1}^{\alpha}(-1)^{j-1-s}g_j\binom{j}{j-1-s} \right)\right).
 \end{align*}
We are then able to substitute $D_s$ as described above to obtain
\begin{align*}
H_{R/I(G)} (t) 
&= 1+\sum_{s=0}^{\alpha-1}D_s \left[ \sum_{d=1}^{\infty}\binom{d+s}{s} t^d\right]\\
&= 1-\sum_{s=0}^{\alpha-1}D_s+\sum_{s=0}^{\alpha-1}\frac{D_s}{(1-t)^{s+1}}\\
&= \frac{\left(1-\sum_{s=0}^{\alpha-1}D_s\right)(1-t)^{\alpha}+\sum_{s=0}^{\alpha-1}D_s(1-t)^{\alpha-s-1}}{(1-t)^{\alpha}}
\end{align*}
where the second to last equality is due to the following $$\displaystyle 1+\sum_{d=1}^{\infty}\binom{d+s}{s} t^d=\frac{1}{(1-t)^{s+1}}$$
which is the Hilbert series of the polynomial ring $k[x_0,x_1,\ldots, x_s]$.  
\end{proof}

\begin{ex}
Consider the following triangle star graphs $G$ and $H$ with 2 and 3 triangles, respectively, in \Cref{fig:ex3.3}. 
\begin{figure}[h]
\centering
\begin{tikzpicture}[scale = .5,line cap=round,line join=round,>=triangle 45,x=1cm,y=1cm]
\clip(-4.60520298625337,-4.307995749895294) rectangle (5.063704402188539,2.412813004067189);
\draw [line width=1pt] (-3,2.16)-- (-3,-2.16);
\draw [line width=1pt] (-3,-2.16)-- (0,0);
\draw [line width=1pt] (-3,2.16)-- (0,0);
\draw [line width=1pt] (0,0)-- (3,2.16);
\draw [line width=1pt] (3,2.16)-- (3,-2.16);
\draw [line width=1pt] (3,-2.16)-- (0,0);
\draw (-3.3,2) node[anchor=east]{$G:$};
\begin{scriptsize}
\draw [fill=black] (-3,2.16)  circle (5pt);
\draw [fill=black] (-3,-2.16) circle (5pt);
\draw [fill=black] (0,0) circle (5pt);
\draw [fill=black] (3,2.16) circle (5pt);
\draw [fill=black] (3,-2.16) circle (5pt);
\end{scriptsize}
\end{tikzpicture}
\begin{tikzpicture}[scale = .5,line cap=round,line join=round,>=triangle 45,x=1cm,y=1cm]
\clip(-4.60520298625337,-4.307995749895294) rectangle (5.063704402188539,3.22813004067189);
\draw [line width=1pt] (0,0)-- (-1.44,3);
\draw [line width=1pt] (0,0)-- (1.44,3);
\draw [line width=1pt] (2.88,0)-- (1.44,3);
\draw [line width=1pt] (-2.88,0)-- (-1.44,3);
\draw [line width=1pt] (-2.88,0)-- (0,0);
\draw [line width=1pt] (2.88,0)-- (0,0);
\draw [line width=1pt] (0,0)-- (-1.44,-3);
\draw [line width=1pt] (0,0)-- (1.44,-3);
\draw [line width=1pt] (1.44,-3)-- (-1.44,-3);
\draw (-3.3,2) node[anchor=east]{$H:$};
\begin{scriptsize}
\draw [fill=black] (-1.44,3) circle (5pt);
\draw [fill=black] (1.44,3) circle (5pt);
\draw [fill=black] (0,0) circle (5pt);
\draw [fill=black] (2.88,0) circle (5pt);
\draw [fill=black] (-2.88,0) circle (5pt);
\draw [fill=black] (-1.44,-3) circle (5pt);
\draw [fill=black] (1.44,-3) circle (5pt);
\end{scriptsize}
\end{tikzpicture}
\caption{$\deg h_{R/I(G)}(t)\neq\alpha(G)$ and $\deg h_{R/I(H)}(t)=\alpha(H)$}
\label{fig:ex3.3}
\end{figure}
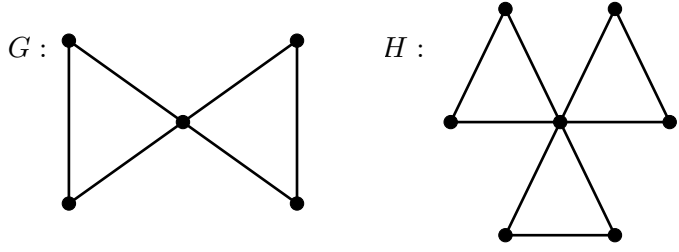

A simple computation gives $\deg h_{R/I(G)}(t)=1$ and $\deg h_{R/I(H)}(t)=3$. By inspection, we can see $\alpha(G) = 2$ and $\alpha(H)=3$. Finally, counting independent sets we can see $g(G) = g_1 - g_2 = 5 - 4 = 1$, as predicted by Theorem \ref{Thm:alternate_sum}, and $g(H) = g_1 - g_2+ g_3 = 7 - 12 + 8 = 3$.
\end{ex}

\begin{cor}\label{DegreeintermsofD}
 If $g=1$, then $\deg h_{R/I(G)}(t)=\alpha(G) - d'-1$ where $$d'= \min \{s : D_s\neq0\}$$ for $0\leq s\leq \alpha(G)-1$.
\end{cor}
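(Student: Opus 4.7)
The plan is to use the explicit rational-function expression for $H_{R/I(G)}(t)$ already derived in the proof of \Cref{Thm:alternate_sum} and then read off the degree of the resulting numerator polynomial. Recall from that proof that
\[
H_{R/I(G)}(t) = \frac{\left(1-\sum_{s=0}^{\alpha-1}D_s\right)(1-t)^{\alpha}+\sum_{s=0}^{\alpha-1}D_s(1-t)^{\alpha-s-1}}{(1-t)^{\alpha}}
\]
and that $\sum_{s=0}^{\alpha-1} D_s = g$. The first move is therefore simply to substitute $g = 1$: the term $\bigl(1-\sum_{s=0}^{\alpha-1}D_s\bigr)(1-t)^\alpha$ vanishes, so
\[
H_{R/I(G)}(t) = \frac{\displaystyle\sum_{s=0}^{\alpha-1} D_s (1-t)^{\alpha-s-1}}{(1-t)^{\alpha}} = \frac{\displaystyle\sum_{s=d'}^{\alpha-1} D_s (1-t)^{\alpha-s-1}}{(1-t)^{\alpha}},
\]
where the last equality uses the definition of $d'$ as the smallest index with $D_{d'}\neq 0$.

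The next step is to determine the degree of the numerator as a polynomial in $t$. Writing the numerator as a polynomial in $(1-t)$, its highest power of $(1-t)$ is $D_{d'}(1-t)^{\alpha-d'-1}$, with nonzero coefficient $D_{d'}$. Expanding in $t$, the leading term is $(-1)^{\alpha-d'-1} D_{d'} t^{\alpha-d'-1}$, so the numerator has degree exactly $\alpha - d' - 1$ in $t$.

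Finally, I need to rule out any cancellation between numerator and denominator that could change the degree of the reduced $h$-polynomial. Equivalently, one must verify that $(1-t)$ does not divide the numerator. Evaluating the numerator at $t=1$ leaves only the $s=\alpha-1$ term, giving $D_{\alpha-1} = g_{\alpha} \neq 0$ (this nonvanishing was already noted in the proof of \Cref{Thm:alternate_sum}). Hence $(1-t) \nmid \sum_{s=d'}^{\alpha-1} D_s (1-t)^{\alpha-s-1}$, so the numerator is the $h$-polynomial itself and $\deg h_{R/I(G)}(t) = \alpha - d' - 1$.

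There is no serious obstacle here: the argument is essentially bookkeeping on top of \Cref{Thm:alternate_sum}. The only point requiring a moment's care is the non-cancellation check at $t=1$, which is what guarantees that dropping the vanishing $g=1$ term does not accidentally leave a numerator still sharing a factor of $(1-t)$ with the denominator.
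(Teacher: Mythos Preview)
Your proof is correct and follows exactly the route the paper intends: the corollary is stated without proof precisely because it is immediate from the expression \eqref{eq:HS_2} established in the proof of \Cref{Thm:alternate_sum}, together with the observation there that $D_{\alpha-1}=g_\alpha\neq 0$. Your bookkeeping---substituting $g=1$, identifying the top power of $(1-t)$ in the surviving numerator, and checking non-cancellation at $t=1$---is precisely what is needed.
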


\section{Degrees of Paths and Cycles}\label{sec:path_cycles}

In this section, we provide formulas for the degree of the $h$-polynomial for edge ideals of paths and cycles.

Let $P_n$ and $C_n$ denote the path and cycle on $n$ vertices, respectively. Label the vertices of $P_n$ and $C_n$ as $v_1, v_2, \ldots, v_n$ such that $\{v_j, v_{j+1}\}$ is an edge of $P_n$ and $C_n$ for all $1 \leq j \leq n-1$, and $\{v_1, v_n\}$ is an edge of $C_n$. Recall from Notation \ref{notationforHS} that $g_i$ denotes the number of independent sets in size $i$ for a given graph $G$. We begin this section by presenting closed formulas for $g_i$ where $G$ is $P_n$ or $C_n$.

\begin{lem}\label{gformulaforPath}
 Let $p_{n,i}$ denote the number of independent sets of size $i$ for the path $P_n$. Then  
  $$ \displaystyle p_{n,i}= \begin{cases}
        \binom{n-i+1}{i} & \text{if }  n\geq 2i-1,\\
        0 &  \text{otherwise}.
    \end{cases}$$
\end{lem}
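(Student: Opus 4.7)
My plan is to prove the formula by a direct bijection between independent sets of size $i$ in $P_n$ and $i$-element subsets of $\{1,2,\ldots,n-i+1\}$, together with a short pigeonhole argument for the range $n < 2i - 1$.

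For the substantive case $n \geq 2i-1$, I would encode an independent set of size $i$ in $P_n$ by the strictly increasing sequence $1 \leq j_1 < j_2 < \cdots < j_i \leq n$ of indices of its vertices. Independence in $P_n$ is equivalent to requiring $j_{k+1} - j_k \geq 2$ for every $1 \leq k \leq i-1$, since the edges of $P_n$ are exactly the pairs of consecutive indices. I would then apply the classical shift bijection $k_\ell := j_\ell - (\ell-1)$, which replaces the $\geq 2$ gap condition by an ordinary $\geq 1$ gap condition. This yields a strictly increasing sequence $1 \leq k_1 < k_2 < \cdots < k_i \leq n - i + 1$, and the map is plainly invertible (set $j_\ell := k_\ell + (\ell - 1)$, which preserves both the order and the lower bound $k_\ell \geq \ell$ needed to ensure $j_\ell \geq 2\ell - 1$). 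Counting the targets gives $\binom{n-i+1}{i}$ independent sets of size $i$.

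For the case $n < 2i - 1$, I would observe that an independent set of size $i$ contained in $\{v_1, \ldots, v_n\}$ with indices $j_1 < \cdots < j_i$ forces $j_i \geq j_1 + 2(i-1) \geq 1 + 2(i-1) = 2i - 1$; hence $n \geq j_i \geq 2i-1$, contradicting our assumption. Thus no independent set of size $i$ exists, and $p_{n,i} = 0$. Equivalently, this case is consistent with the binomial coefficient $\binom{n-i+1}{i}$ vanishing whenever $n - i + 1 < i$, so the two cases can in fact be unified if desired.

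I do not expect any real obstacle here; the argument is entirely elementary. The only minor point to handle carefully is the boundary, making sure the shift bijection lands in the correct range and that the edge case $i = 0$ (empty independent set) is consistent with $p_{n,0} = 1 = \binom{n+1}{0}$. If the bijective route were for some reason unsuitable, a clean alternative is induction on $n$ using the standard recursion $p_{n,i} = p_{n-1,i} + p_{n-2,i-1}$, obtained by conditioning on whether $v_n$ belongs to the independent set, and then invoking Pascal's identity to check $\binom{n-i+1}{i} = \binom{n-i}{i} + \binom{n-i}{i-1}$.
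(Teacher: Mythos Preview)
Your proof is correct, but it takes a different route from the paper's. The paper argues by induction on $n$, using exactly the recursion you mention at the end as a fallback: it conditions on whether the leaf $v_n$ lies in the independent set to obtain $p_{n,i}=p_{n-1,i}+p_{n-2,i-1}$, and then verifies the formula via Pascal's identity, splitting into the three cases $n\geq 2i$, $n=2i-1$, and $n<2i-1$. Your primary argument is instead the classical shift bijection $k_\ell=j_\ell-(\ell-1)$, which identifies independent $i$-sets in $P_n$ directly with $i$-subsets of $\{1,\ldots,n-i+1\}$. The bijective approach has the advantage of avoiding any boundary case analysis in the inductive step and of explaining \emph{why} the binomial coefficient appears; the paper's inductive approach is slightly more mechanical but dovetails with its subsequent treatment of cycles, where the count $c_{n,i}$ is reduced to $p_{n-3,i-1}$. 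Either way the result is immediate, and since you already flagged the inductive alternative, you are fully aligned with the paper's method as well.
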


\begin{proof}
We will proceed by induction on $n$ with $0\leq i \leq n$. The statement is clear when $n=1$ and $n=2$, or when $i=0$. 

An independent set of size $i>0$ for $P_n$ with $n\geq 3$ either contains the leaf $v_n$ or does not. Hence the number of independent sets containing $v_n$ is $p_{n-2,i-1}$ while the number of independent sets that do not is $p_{n-1,i}$. Thus we have the recurrence relation $p_{n,i}=p_{n-1,i}+p_{n-2,i-1}$, and there are now three cases to consider based on the relation of $n$ with $2i-1$. 

If $n\geq 2i$, then
    \[ 
        p_{n,i} =\binom{n-i}{i}+\binom{n-i}{i-1}=\binom{n-i+1}{i}.
    \]
If $n=2i-1$, then 
    \[
        p_{n,i}=0+\binom{i-1}{i-1}=\binom{i}{i}=\binom{n-i+1}{i}.
    \]
Finally, if $n <2i-1$, we have $\displaystyle p_{n,i}=0+0=0$. 
\end{proof}

\begin{lem}\label{gformulaforCycle}
    Let $c_{n,i}$ denote the number of independent sets of size $i$ for the cycle $C_n$ for $n\geq 3$. Then 
    $$ \displaystyle c_{n,i}= \begin{cases}
        \frac{n}{i}\binom{n-i-1}{i-1} & \text{if }  n\geq 2i,\\
        0 &  \text{otherwise.}
    \end{cases}$$
\end{lem}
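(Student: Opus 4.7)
The plan is to mirror the strategy used for paths in Lemma \ref{gformulaforPath}: condition on whether a distinguished vertex lies in the independent set and thereby reduce the count on $C_n$ to counts on paths, then invoke Lemma \ref{gformulaforPath} and simplify.

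Concretely, I would fix the vertex $v_n$ and partition the independent sets of size $i$ according to whether they contain $v_n$. If $v_n$ is in the set, then its two cycle-neighbors $v_1$ and $v_{n-1}$ are excluded, and we must choose $i-1$ more vertices forming an independent set in the subgraph induced on $\{v_2, \ldots, v_{n-2}\}$, which is a copy of $P_{n-3}$. If $v_n$ is not in the set, we must choose $i$ vertices forming an independent set in the subgraph induced on $\{v_1, \ldots, v_{n-1}\}$, which is a copy of $P_{n-1}$. This yields the recurrence
$$c_{n,i} = p_{n-3,\,i-1} + p_{n-1,\,i}.$$

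Next, applying Lemma \ref{gformulaforPath} to each summand when $n \geq 2i$ gives
$$c_{n,i} = \binom{n-i-1}{i-1} + \binom{n-i}{i}.$$
Using the identity $\binom{n-i}{i} = \frac{n-i}{i}\binom{n-i-1}{i-1}$, the right-hand side collapses to $\bigl(1 + \tfrac{n-i}{i}\bigr)\binom{n-i-1}{i-1} = \tfrac{n}{i}\binom{n-i-1}{i-1}$, as claimed. For the vanishing case $n < 2i$, both $p_{n-3,i-1}$ and $p_{n-1,i}$ are zero by Lemma \ref{gformulaforPath}, since $n-3 < 2(i-1)-1$ and $n-1 < 2i-1$; hence $c_{n,i} = 0$.

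No step presents a genuine obstacle; the only mild care needed is at the boundary $n = 2i$ (which forces the use of the empty path $P_0$ with the convention $p_{0,0} = 1$, consistent with the formula $\binom{1}{0}$) and at small $n$ such as $n = 3$, where the recurrence still returns the correct value $c_{3,1} = 3$. The combinatorial reduction to paths is the conceptual core; everything else is routine binomial manipulation.
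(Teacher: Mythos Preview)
Your proof is correct, but it proceeds differently from the paper's argument. The paper exploits the cyclic symmetry of $C_n$ via a double-counting argument: for each vertex $v_j$, the number of size-$i$ independent sets containing $v_j$ is $p_{n-3,i-1}$; summing over all $n$ vertices counts each independent set exactly $i$ times, so $i\,c_{n,i}=n\,p_{n-3,i-1}$ and the factor $\tfrac{n}{i}$ falls out immediately with no binomial manipulation required. Your approach instead fixes a single vertex and conditions on its membership, yielding $c_{n,i}=p_{n-3,i-1}+p_{n-1,i}$; this is more directly parallel to the path recursion in Lemma~\ref{gformulaforPath} and avoids the double-counting step, but trades it for the algebraic identity $\binom{n-i}{i}=\tfrac{n-i}{i}\binom{n-i-1}{i-1}$ needed to collapse the two binomials into the stated form. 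Either route is perfectly clean; the paper's makes the structural origin of the $\tfrac{n}{i}$ factor transparent, while yours keeps the inductive template uniform across paths and cycles.
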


\begin{proof}
Note that $c_{n,i} = 0$ for $n<2i$ so we assume $n\geq 2i$.  It is immediate that $c_{n,1}=n$ for $n\geq 3$. Suppose $i\geq 2$. For a vertex $v_j$ on the cycle, consider  an independent set $\mathcal{C}$ of $C_n$ such that $v_j \in \mathcal{C}$ and $|\mathcal{C}|=i$. The set $\mathcal{C} \setminus \{v_j\}$ is an independent set of a path on $(n-3)$-vertices (because $v_{j-1}$ and $v_{j+1}$ (modulo $n$) are not in $\mathcal{C}$). 

 So, the number of independent sets of size $i$ of $C_n$ containing the vertex $v_j$ is equal to $p_{n-3,i-1}$. Since there are $n$ vertices in $C_n$ we multiply by $n$ to get $np_{n-3,i-1}$. This counts each independent set of size $i$ in $C_n$ a total of $i$ times, so $i c_{n,i}=n p_{n-3,i-1}$.  Thus, we have
$$c_{n,i}  = \frac{n}{i} (p_{n-3,i-1})=  \begin{cases} \displaystyle \frac{n}{i}\binom{n-i-1}{i-1} &\text{if } n\geq 2i, \\ 0 & \text{otherwise }\end{cases}$$
 by \Cref{gformulaforPath}.
\end{proof}

It is known that $\alpha(P_n) = \left\lceil \frac{n}{2} \right\rceil$ and $\alpha(C_n) = \left\lfloor \frac{n}{2} \right\rfloor$. Additionally, these values can be derived as corollaries of Lemmas \ref{gformulaforPath} and \ref{gformulaforCycle}.

As discussed in \Cref{sec:max_degree}, one can deduce the degrees of the $h$-polynomials of   $R/I(P_n)$ and $R/I(C_n)$ from the alternating sums of $p_{n,i}$ and $c_{n,i}$. In the next lemma, we provide the values of these sums using Lemmas \ref{gformulaforPath} and \ref{gformulaforCycle}.  For ease of notation, let $$ p_n =  \displaystyle \sum_{j=1}^{\lceil \frac{n}{2}\rceil} (-1)^{j-1}  p_{n,j} \text{ and } c_n =  \displaystyle \sum_{j=1}^{\lfloor \frac{n}{2}\rfloor} (-1)^{j-1}  c_{n,j}.$$

\begin{lem}\label{Sumofgpath_cycle} 
For $n\geq 1$, 
\[ \displaystyle p_n =\begin{cases}  2 & \text{if } n \equiv 2,3 \pmod{6}, \\ 1 & \text{if } n \equiv 1,4 \pmod{6}, \\ 0 &\text{if }  n \equiv 0,5\pmod{6}. \end {cases}\]
For $n\geq 3$, 
\[ \displaystyle c_n =\begin{cases}  -1 & \text{if } n \equiv 0 \pmod{6}, \\ 0 &\text{if } n \equiv 1,5 \pmod{6}, \\ 2 &\text{if } n \equiv 2,4 \pmod{6}, \\ 3 & \text{if } n \equiv 3 \pmod{6}. \end {cases}\]
\end{lem}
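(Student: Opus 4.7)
The plan is to derive linear recurrences for $p_n$ and $c_n$ directly from the Pascal-type recurrences for $p_{n,i}$ and $c_{n,i}$, and then verify the claimed periodic formulas by a short induction on $n\pmod{6}$.

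First, I will extend the definitions by the conventions $p_{n,0}=1$ and $p_{n,i}=0$ whenever $i<0$ or $i>\lceil n/2\rceil$, so that the alternating sum $p_n=\sum_{j\ge 1}(-1)^{j-1}p_{n,j}$ can be rewritten as $1-\sum_{i\ge 0}(-1)^i p_{n,i}$. From the proof of \Cref{gformulaforPath} we already have the recurrence $p_{n,i}=p_{n-1,i}+p_{n-2,i-1}$ for $n\ge 3$. Multiplying by $(-1)^i$ and summing over $i\ge 0$, the first term contributes $1-p_{n-1}$ and the index shift $k=i-1$ turns the second term into $-(1-p_{n-2})$. Rearranging yields the key recurrence
\begin{equation}\label{eq:recurrence_p}
p_n=p_{n-1}-p_{n-2}+1 \qquad (n\ge 3),
\end{equation}
with initial values $p_1=1$ and $p_2=2$ computed directly from \Cref{gformulaforPath}.

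Next, I will prove the formula for $p_n$ by induction on $n$. Computing the first six values using \Cref{eq:recurrence_p} gives $p_1=1,\ p_2=2,\ p_3=2,\ p_4=1,\ p_5=0,\ p_6=0$, matching the stated table. The inductive step is immediate: assuming the formula for $n-1$ and $n-2$, one checks all six residue classes mod $6$ separately, each time confirming that $p_{n-1}-p_{n-2}+1$ gives the value predicted for $n\pmod{6}$ (for example, when $n\equiv 1\pmod 6$ one has $p_{n-1}-p_{n-2}+1 = 0-0+1=1$, and so on). Since \eqref{eq:recurrence_p} is a first-order recurrence in $(p_{n-1},p_{n-2})$, the six-fold periodicity propagates forward for all $n\ge 1$.

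For the cycle, I will use the standard decomposition of an independent set of $C_n$ according to whether it contains a fixed vertex $v_1$: those not containing $v_1$ are independent sets of the path $v_2,\dots,v_n$ of size $j$, while those containing $v_1$ correspond to $\{v_1\}$ together with an independent set of the path $v_3,\dots,v_{n-1}$ of size $j-1$. This gives $c_{n,j}=p_{n-1,j}+p_{n-3,j-1}$ for $n\ge 4$ (and a direct check for $n=3$). Applying the same shift-of-index argument as above to the alternating sum produces
\begin{equation}\label{eq:recurrence_c}
c_n=p_{n-1}-p_{n-3}+1 \qquad (n\ge 4),
\end{equation}
with $c_3=3$ handled separately. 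I will then evaluate the right-hand side of \eqref{eq:recurrence_c} in each of the six residue classes of $n\pmod{6}$ using the formula for $p_n$ already proved; for instance, $n\equiv 0$ gives $p_5-p_3+1=0-2+1=-1$ and $n\equiv 3$ gives $p_2-p_0+1=2-0+1=3$, and the remaining four cases go the same way.

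The only subtle point I expect is bookkeeping at the boundary of the summations: one must verify that the recurrence $p_{n,i}=p_{n-1,i}+p_{n-2,i-1}$ and its cycle analogue remain valid (with the zero convention) at the largest index $i=\lceil n/2\rceil$, where one or both summands on the right may vanish. Once this is checked, the index shift used to derive \eqref{eq:recurrence_p} and \eqref{eq:recurrence_c} is routine and the rest of the argument reduces to verifying six small arithmetic identities.
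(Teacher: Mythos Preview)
Your argument is correct and ends up at exactly the same three--term recurrence $p_n=p_{n-1}-p_{n-2}+1$ that the paper obtains (their $p_{k+2}-p_{k+1}=1-p_k$), so for paths the two proofs are essentially the same; the only difference is that you feed the Pascal recurrence $p_{n,i}=p_{n-1,i}+p_{n-2,i-1}$ directly into the alternating sum, whereas the paper manipulates the explicit binomial formulas from \Cref{gformulaforPath} to reach the same relation. Your route is a bit cleaner and avoids splitting into even and odd $n$.

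Where you genuinely diverge is the cycle case. The paper redoes the successive-difference computation with the closed form of \Cref{gformulaforCycle} to get an independent recurrence $c_{\ell+2}-c_{\ell+1}=1-c_\ell$ and then computes $c_3,c_4,c_5$. You instead invoke the vertex-deletion identity $c_{n,j}=p_{n-1,j}+p_{n-3,j-1}$ to obtain $c_n=p_{n-1}-p_{n-3}+1$ and simply read the values off the already-established table for $p_n$. This is shorter and reuses the path result rather than repeating the analysis; the paper's version, on the other hand, stays entirely within the cycle data and does not rely on the $p_n$ formula. One cosmetic point: your ``for instance'' computation for $n\equiv 3$ writes $p_2-p_0+1$, but $p_0$ is not defined; since you already said $c_3$ is handled directly, just illustrate with $n=9$ (giving $p_8-p_6+1=2-0+1=3$) instead.
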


\begin{proof}
In order to find the values of $p_n$ and $c_n$, we first show that $p_k=p_{k+6}$ for  $k\geq 1$ and $c_\ell =c_{\ell+6}$ for $\ell\geq 3$. Then, the statement follows by computing the values of $p_n$  and $c_n$ for small  $n$.

As the first step, consider the successive differences of $p_{2m+2}, p_{2m+1}$ and $p_{2m}$ for $m\geq1$. In particular, using \Cref{gformulaforPath} and the following binomial identity $$ \displaystyle \binom{n+1}{i+1}-\binom{n}{i+1}=\binom{n}{i},$$ we get
\begin{align*}
p_{2m+1}-p_{2m} &= \sum_{j=1}^{\lceil \frac{2m+1}{2} \rceil} (-1)^{j-1} p_{2m+1,j} -\sum_{j=1}^{\lceil \frac{2m}{2} \rceil} (-1)^{j-1} p_{2m,j}\\
    &= \sum_{j=1}^{m+1} (-1)^{j-1} \binom{2m+1-j+1}{j} -\sum_{j=1}^{m} (-1)^{j-1} \binom{2m-j+1}{j}\\
    &= (-1)^m +\sum_{j=1}^{m} (-1)^{j-1} \binom{2m+1-j}{j-1}\\
    &= (-1)^m +\sum_{j=0}^{m-1} (-1)^{j} \binom{2m-j}{j}\\
    &= (-1)^m +(-1)^0\binom{2m}{0}+ \sum_{j=1}^{m} (-1)^{j} \binom{2m-j}{j} - (-1)^m\\
    & = 1-p_{2m-1}.
\end{align*}
Similarly, one can verify that  $p_{2m+2}-p_{2m+1}=1-p_{2m}$. 
 
These identities imply that regardless of parity $p_{k+2}-p_{k+1}=1-p_{k} \text{ for } k\geq 1.$ Then, by adding  $p_{k+3}-p_{k+2}=1-p_{k+1}$ and $p_{k+2}-p_{k+1}=1-p_{k}$, we get 
    \begin{equation}\label{eq:pk_recursion}
    p_k+p_{k+3}=2 \text{ for }k\geq 1
    \end{equation}
which in turn implies
    \[
    p_k=p_{k+6}\text{ for }k\geq 1.
    \]  
 Thus to find all values of $p_n$ it suffices to compute $p_1, p_2$ and $p_3$.  Using \Cref{gformulaforPath}, we get $p_1=1$ and $p_2=p_3=2$.  Hence, we obtain the value of $p_n$ for each $n$ as given in the statement of the lemma.

For $c_n$, we use \Cref{gformulaforCycle} and the same arguments used for $p_n$. 
As for $p_n$, one can verify that $ c_{\ell+2}-c_{\ell+1}=1-c_{\ell}$ for $\ell\geq 3$. In addition, $c_\ell+c_{\ell+3}=2$ and $c_\ell=c_{\ell+6}$ for $\ell\geq 3$. Thus, one can conclude the values of $c_n$ for each $n$ by only evaluating $c_n$ for $n=3,4$ and $5$. A quick computation shows that $c_3=3, c_4=2, c_5=0$. This completes  the proof for $c_n$.
\end{proof}

We are now ready to present the formulas for the degrees of the \( h \)-polynomials of paths and cycles. However, before we proceed, we introduce the following auxiliary lemma, which will be useful in providing the explicit formula for $\deg h_{R/I(P_n)}(t)$.

\begin{lem}\label{D0Values}
Let $D_0$ be defined for $P_n$ as in \Cref{eq:D_s} from the proof of \Cref{Thm:alternate_sum}. As the value of $D_0$ depends on $n$, refer to the sum as $T_n$, i.e.
\[
    T_n= D_0= \sum_{j=1}^{\left\lceil \frac{n}{2} \right\rceil} (-1)^{j-1}jp_{n,j}
\]
where $p_{n,j}$ is as defined in \Cref{gformulaforPath}. 
Then we have 
\begin{equation}\label{eq:T_n}
    T_n=\begin{cases}
        (-1)^k (k+1) & \text{if } n=3k+1 \text{ or } n=3k+3,\\
        (-1)^k 2 (k+1) & \text{if } n=3k+2.
        \end{cases}
\end{equation}
\end{lem}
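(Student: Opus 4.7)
The plan is to derive a short, periodic recurrence for $T_n$ from the Pascal-type relation on $p_{n,j}$ and then verify the closed form by induction using Lemma \ref{Sumofgpath_cycle}.

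\emph{Step 1 (recurrence).} Starting from the definition, apply the identity $p_{n,j}=p_{n-1,j}+p_{n-2,j-1}$ (obtained in the proof of Lemma \ref{gformulaforPath}) to split $T_n$ into two sums. The first is $T_{n-1}$. In the second, reindex by $i=j-1$, so that the generic summand becomes $(-1)^{i}(i+1)p_{n-2,i}$. Separating the factor $(i+1)$ as $i+1$ and peeling off the $i=0$ term $p_{n-2,0}=1$ gives
\[
\sum_{i\ge 0}(-1)^i i\,p_{n-2,i}=-T_{n-2},\qquad \sum_{i\ge 0}(-1)^i p_{n-2,i}=1-p_{n-2},
\]
where $p_{n-2}$ denotes the alternating sum from Lemma \ref{Sumofgpath_cycle}. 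Combining these gives
\[
T_n=T_{n-1}-T_{n-2}+1-p_{n-2}\qquad (n\ge 3).
\]

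\emph{Step 2 (three-step form).} Iterating this recurrence twice, the second-order terms collapse and one obtains the cleaner identity
\[
T_{n+3}+T_n=2-p_n-p_{n+1},
\]
whose right-hand side depends only on $n\bmod 6$ by Lemma \ref{Sumofgpath_cycle}; using the relation $p_k+p_{k+3}=2$ established there, the list of values is $-1,-2,-1,1,2,1$ for $n\equiv 1,2,3,4,5,0\pmod 6$, respectively.

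\emph{Step 3 (base cases and induction).} Evaluate $T_1=1$, $T_2=2$, $T_3=1$ directly from Lemma \ref{gformulaforPath}; each agrees with the claimed formula at $k=0$. Now induct on $n$ in steps of three. Split according to $n\equiv 1,2,0\pmod 3$, writing $n=3k+r$, and in each residue class consider the two parities of $k$ (which together exhaust the six residues mod $6$). In each sub-case, substitute the inductive closed form for $T_n$ together with the value of $2-p_n-p_{n+1}$ from Step 2 into $T_{n+3}=-T_n+(2-p_n-p_{n+1})$, and check that the result matches the formula for $T_{n+3}$. For instance, when $n=3k+1$ one has $T_{n+3}+T_n=(-1)^{k+1}(k+2)+(-1)^k(k+1)=(-1)^{k+1}$, which equals $2-p_n-p_{n+1}$ for both parities of $k$; the remaining five sub-cases are analogous.

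\emph{Main obstacle.} Neither step is algebraically deep; the only delicate point is the bookkeeping in Step 3. As $n\mapsto n+3$, the parameter $k$ in the decomposition $n=3k+r$ advances by $1$ and hence the sign $(-1)^k$ flips, so one must carefully align each of the six residues $n\bmod 6$ with both the correct parity of $k$ and the correct value of $2-p_n-p_{n+1}$ to confirm the equality in every case.
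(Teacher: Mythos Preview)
Your proof is correct and follows essentially the same strategy as the paper: derive the second-order recurrence $T_{n+2}-T_{n+1}+T_n=1-p_n$ (you obtain it from the Pascal identity $p_{n,j}=p_{n-1,j}+p_{n-2,j-1}$, while the paper computes binomial differences directly), iterate it, and induct. The only tactical difference is that you stop at the inhomogeneous three-step relation $T_{n+3}+T_n=2-p_n-p_{n+1}$ and handle six residue classes in the inductive step, whereas the paper iterates once more to the homogeneous relation $T_n+2T_{n+3}+T_{n+6}=0$ at the cost of needing two base cases per residue class mod~$3$; neither choice is materially simpler.
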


\begin{proof}
We begin by rewriting the summation defining $T_n$ using \Cref{gformulaforPath}
$$T_n =\sum_{j=1}^{\left\lceil \frac{n}{2} \right\rceil} (-1)^{j-1} j\binom{n-j+1}{j}.$$

Using a similar strategy as in the proof of \Cref{Sumofgpath_cycle}, we consider successive differences of $T_{2m+2}, T_{2m+1}$ and $ T_{2m}$ for $m\geq 1$ to obtain the recurrence relation 
    \[
        T_{n+2}-T_{n+1}+T_n=1-p_n, \text{ for } n\geq 1.
    \]
Adding the corresponding expressions of the above recursion with first terms equal to $T_{n+6}$, $T_{n+5}$, $T_{n+3}$ and $T_{n+2}$ one obtains
    \[
        T_n+2T_{n+3}+T_{n+6} = 4-p_{n+4}-p_{n+3}-p_{n+1}-p_n.
    \]
 Then applying \Cref{eq:pk_recursion} gives 
    \begin{equation}\label{eq:Tn_recursion}
      T_n+2T_{n+3}+T_{n+6}=0.    
    \end{equation}

 We now proceed by induction on $k\geq 0$ to prove \Cref{eq:T_n}. We only provide details for the $n=3k+1$ case and the remaining cases follow similarly. 
 
 Since $T_1=1$ and $T_4=-2$,  the claim holds for $k=0, 1$.  Suppose $k\geq 1$ and consider $T_n$ when $n=3(k+1)+1$. Our goal is to show $T_n= (-1)^{k+1}(k+2)$. By the induction hypothesis we have 
    $$T_{n-3}= T_{3k+1}= (-1)^k (k+1) \text{ and } T_{n-6}=T_{3(k-1)+1}=(-1)^{k-1}k.$$
    It then follows from Equation (\ref{eq:Tn_recursion}) that 
    $$T_n= -2T_{n-3}-T_{n-6}=(-1)^{k+1}2 (k+1)+ (-1)^k k= (-1)^{k+1}(k+2).$$
\end{proof}

\begin{thm}\label{thm:path_cycle}
The degree of the $h$-polynomial of $R/I(C_n)$ for $n\geq 3$ is 
$$  \deg h_{R/I(C_n)}(t)  = \left\lfloor \frac{n}{2} \right\rfloor.$$
The degree of the $h$-polynomial of $R/I(P_n)$ for $n\geq 1$ is 
$$ \deg h_{R/I(P_n)}(t) = \begin{cases}
         \left\lceil \frac{n}{2} \right\rceil & \text{if } n \equiv  0,2 \pmod 3,\\
        \left\lceil \frac{n}{2} \right\rceil-1 & \text{if } n \equiv 1 \pmod 3.
    \end{cases}$$ 
\end{thm}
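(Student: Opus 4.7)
The plan is to reduce the theorem to a direct application of the criteria established in Section \ref{sec:max_degree}, feeding in the explicit alternating-sum values computed in Lemma \ref{Sumofgpath_cycle} and, where needed, the auxiliary computation in Lemma \ref{D0Values}.

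\textbf{Cycles.} The cycle case is essentially a one-line invocation of \Cref{Thm:alternate_sum}. By \Cref{Sumofgpath_cycle}, the quantity $c_n = g(C_n)$ takes values in $\{-1,0,2,3\}$ depending on $n \pmod 6$, and in particular $c_n \neq 1$ for every $n \geq 3$. \Cref{Thm:alternate_sum} then immediately yields $\deg h_{R/I(C_n)}(t) = \alpha(C_n) = \lfloor n/2 \rfloor$.

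\textbf{Paths.} For the path case I would split on the residue of $n$ modulo $3$. When $n \equiv 0 \pmod 3$, we have $n \equiv 0$ or $3 \pmod 6$, so by \Cref{Sumofgpath_cycle} the value $p_n = g(P_n)$ is either $0$ or $2$; similarly when $n \equiv 2 \pmod 3$, we have $n \equiv 2$ or $5 \pmod 6$ and $p_n \in \{0,2\}$. In either situation $p_n \neq 1$, so \Cref{Thm:alternate_sum} gives $\deg h_{R/I(P_n)}(t) = \alpha(P_n) = \lceil n/2 \rceil$. The remaining case is $n \equiv 1 \pmod 3$, i.e. $n \equiv 1$ or $4 \pmod 6$, where \Cref{Sumofgpath_cycle} gives $p_n = 1$, so \Cref{Thm:alternate_sum} does \emph{not} apply and we must instead invoke \Cref{DegreeintermsofD}.

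For this last case, write $n = 3k+1$ with $k \geq 0$. \Cref{D0Values} tells us $D_0 = T_n = (-1)^k(k+1)$, which is nonzero for every $k \geq 0$. Hence the minimum index $d'$ with $D_{d'} \neq 0$ is $d' = 0$, and \Cref{DegreeintermsofD} yields
\[
\deg h_{R/I(P_n)}(t) = \alpha(P_n) - d' - 1 = \left\lceil \tfrac{n}{2}\right\rceil - 1,
\]
as required.

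\textbf{Main obstacle.} Essentially all the technical weight of the argument has already been absorbed into Lemmas \ref{gformulaforPath}--\ref{D0Values}, so the proof as proposed is a clean case analysis. The only place that requires a moment of thought is recognizing that the $n \equiv 1 \pmod 3$ subcase is precisely the one in which $g(P_n) = 1$, forcing us to pass from \Cref{Thm:alternate_sum} to \Cref{DegreeintermsofD}; and then verifying via \Cref{D0Values} that $D_0$ never vanishes on that residue class, so that the degree drops by exactly one rather than by more. Everything else is bookkeeping between the mod $3$ statement of the theorem and the mod $6$ statements of the underlying lemmas.
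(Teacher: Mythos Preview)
Your proposal is correct and follows essentially the same approach as the paper: invoke \Cref{Thm:alternate_sum} together with the values in \Cref{Sumofgpath_cycle} for cycles and for paths with $n\equiv 0,2\pmod 3$, and in the remaining case $n\equiv 1\pmod 3$ use \Cref{DegreeintermsofD} with the nonvanishing of $D_0$ supplied by \Cref{D0Values}. The only difference is that you spell out the mod $6$ to mod $3$ bookkeeping a bit more explicitly than the paper does.
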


\begin{proof}
    It follows from Lemmas \ref{Thm:alternate_sum} and \ref{Sumofgpath_cycle}  that the degree of the $h$-polynomial of $R/I(C_n)$ is $\alpha(C_n)= \left\lfloor \frac{n}{2} \right\rfloor$ for $n\geq 3$ and the degree of the $h$-polynomial of $R/I(P_n)$ is $ \alpha(P_n)= \left\lceil \frac{n}{2} \right\rceil$  when $n \equiv 0, 2 \pmod 3$ for $n\geq 1$. It remains to show $\deg h_{R/I(P_n)}(t) = \left\lceil \frac{n}{2} \right\rceil -1$ when $n \equiv 1 \pmod 3$. By Corollary  \ref{DegreeintermsofD}, this is equivalent to showing $D_0 \neq 0$  for the path $P_n$ when $n\equiv 1 \pmod 3$. This follows immediately from \Cref{D0Values}.
\end{proof}

\section{Degrees of Bipartite Graphs}\label{sec:bipartite}
In this section, we study the degrees of the $h$-polynomials of edge ideals of bipartite graphs. In particular, we determine which conditions guarantee the degree of the $h$-polynomial of a bipartite graph is equal to its independence number.

For the remainder of this section, let $G$ denote a connected bipartite graph with a bipartition $(U,V)$ such that $n=|U| \geq |V|=m$. 

As noted in \Cref{Thm:alternate_sum}, the degree of the \( h \)-polynomial of \( R/I(G) \) is determined by \( g = \sum_{j=1}^{\alpha} (-1)^{j-1}g_j\), where \( g_j \) is the number of independent sets of cardinality \( j \) in \( G \). Consequently, our focus is on counting these independent sets. To facilitate this, we partition the sets \( U \) and \( V \) according to the following notation, which will be used throughout this section.

\begin{notation}\label{not:vertex_partition}
   We use the notation $V_W$ to denote the set of all whiskered vertices of $G$ in $V$.  
      $$V_W=\{ v\in V : \{u,v\}\in E(G) \text{ for some leaf } u\in U  \}$$
    The complement of $V_W$ in $V$ is denoted by $\overline{V_W}$. 
    
    We use the notation  $U_L$ to denote a set of leaves in $U$ such that for each $v\in V_W$ there exists exactly one leaf $u\in U_L$ such that $\{u,v\} \in E(G)$ so that $|U_L|=|V_W|$. The complement of $U_L$ in $U$ is denoted by $\overline{U_L}$. Unlike $V_W$, there are multiple possible choices for the set $U_L$. However, the following combinatorial results do not depend on this choice. 
\end{notation}

\begin{ex}\label{ex:3}
Consider the following bipartite graph in \Cref{fig:notation} with the bipartition $(U,V)$ where $U=\{u_1,u_2,u_3, u_4, u_5, u_6\}$ and $V=\{v_1,v_2,v_3, v_4, v_5\}$. 
\begin{figure}[ht]
    \centering
    \begin{tikzpicture}[line cap=round,line join=round,>=triangle 45,x=1cm,y=1cm]
    \clip(-4.17354858383824,-0.49721795610347175) rectangle (4.143337795929728,4.292965128408889);
        \draw [line width=1pt] (-1.98,0)-- (-2,1.9);
        \draw [line width=1pt] (-1.98,0)-- (-1,1.9);
        \draw [line width=1pt] (-1.98,0)-- (0,1.9);
        \draw [line width=1pt] (0,1.9)-- (0,0);
        \draw [line width=1pt] (0,1.9)-- (2,0);
        \draw [line width=1pt] (2,0)-- (1,2);
        \draw [line width=1pt] (2,0)-- (2,2);
        \draw [line width=1pt] (-2,2.1)-- (-2,3.9);
        \draw [line width=1pt] (-1,2.1)-- (-1,3.9);
        \draw [line width=1pt] (0,2.1)-- (0,3.9);
    \begin{small}
        \draw [fill=black] (-1.98,0) circle (2.5pt);
        \draw[color=black] (-1.8,-0.25) node {$u_1$};
        \draw [color=black] (-2,2) ++(-3pt,0 pt) -- ++(3pt,3pt)--++(3pt,-3pt)--++(-3pt,-3pt)--++(-3pt,3pt);
        \draw[color=black] (-1.8,2.18) node {$v_1$};
        \draw [color=black] (-1,2) ++(-3pt,0 pt) -- ++(3pt,3pt)--++(3pt,-3pt)--++(-3pt,-3pt)--++(-3pt,3pt);
        \draw[color=black] (-0.8,2.18) node {$v_2$};
        \draw [color=black] (0,2) ++(-3pt,0 pt) -- ++(3pt,3pt)--++(3pt,-3pt)--++(-3pt,-3pt)--++(-3pt,3pt);
        \draw[color=black] (0.2,2.18) node {$v_3$};
        \draw [fill=black] (0,0) circle (2.5pt);
        \draw[color=black] (0.2,-0.25) node {$u_2$};
        \draw [fill=black] (2,0) circle (2.5pt);
        \draw[color=black] (2.2,-0.25) node {$u_3$};
        \draw [fill=black] (1,2) ++(-3pt,0 pt) -- ++(3pt,3pt)--++(3pt,-3pt)--++(-3pt,-3pt)--++(-3pt,3pt);
        \draw[color=black] (1.2,2.18) node {$v_4$};
        \draw [fill=black] (2,2) ++(-3pt,0 pt) -- ++(3pt,3pt)--++(3pt,-3pt)--++(-3pt,-3pt)--++(-3pt,3pt);
        \draw[color=black] (2.2,2.18) node {$v_5$};
        \draw [color=black] (-2,4) circle (2.5pt);
        \draw[color=black] (-1.8,4.18) node {$u_4$};
        \draw [color=black] (-1,4) circle (2.5pt);
        \draw[color=black] (-0.8,4.18) node {$u_5$};
        \draw [color=black] (0,4) circle (2.5pt);
        \draw[color=black] (0.2,4.18) node {$u_6$};
    \end{small}
\end{tikzpicture}
    \caption{Sets $U_L , V_W, \overline{U_L}$ and $\overline{V_W}$}
    \label{fig:notation}
\end{figure}
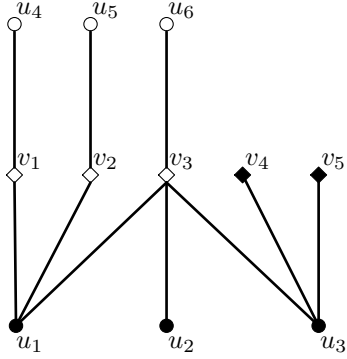
   
For this graph, we have  
$$V_W=\{v_1,v_2,v_3\}, U_L =\{u_4,u_5,u_6\}, \overline{U_L}=\{u_1,u_2,u_3\} \text{ and } \overline{V_W}=\{v_4,v_5\}.$$
\end{ex}

Building on \Cref{not:vertex_partition}, our goal of counting the independent sets in \(G\) can be reduced to analyzing which vertices of \(\overline{U_L}\) are included in an independent set. For instance, the number of independent sets of size \(j\) that do not contain any vertex of \(\overline{U_L}\) is
$$\sum_{i=0}^{j}  2^i \binom{|U_L|}{i}\binom{|\overline{V_W}|}{j-i}.$$
In what follows, we provide a similar formula for any subset of \(\overline{U_L}\) and use it to express \(g\) in terms of these subsets. We start with the following notation, which is used for the remainder of this section.

\begin{notation}
  Let $S$ denote a subset of $\overline{U_L}$ for the remainder of this section.  
  \begin{enumerate}
      \item We use the notation $g_j (S)$ to denote the number of independent sets in $G$ of size $j$ whose intersection with $\overline{U_L}$ is exactly $S$. 
  $$g_j (S) = |\{ \mathcal{C} \text{ is an independent set of } G : |\mathcal{C}|=j \text{ and } \mathcal{C} \cap \overline{U_L} = S  \}|$$
The notation $g_j(S)$ is meant to be evocative of $g_j(G)$, but it should not be confused with it,  $g_j(G)=g_j$ is the number of independent subsets of size $j$ for the graph $G$.

  Similar to before, we use $g(S)$ to denote the alternating sum 
    \[
        g(S)=\sum_{j=1}^{\alpha(G)}(-1)^{j-1}g_j(S).
    \]  
  \item Let $N(S)$ denote the set of all neighbors of vertices in $S$ in $G$. Note that $N(S)\subseteq V$. We introduce the following two sets, $W_S$ and $W'_S$ which are the sets of elements in $V_W$ that are not in $N(S)$ and elements in $\overline{V_W}$ that are not in $N(S)$, respectively:
    \begin{align*}
        W_S &  = V_W \setminus N(S)\\
        W_S' &= \overline{V_W} \setminus N(S).
    \end{align*}
  \end{enumerate}     
\end{notation}

This notation is illustrated in the following example.

   \begin{ex}
    For the bipartite graph in Figure \ref{fig:notation}, let $S_1=\{u_2,u_3\}, S_2= \{u_1,u_3\}$ and $S_3=\{u_1,u_2\}$. 

    To construct an independent set of size three whose intersection with $\overline{U_L}$ is exactly $S_1$, we can add any one of the vertices $v_1,v_2,u_4,u_5,$ or $u_6$. Thus $g_3(S_1)=5$. Similarly $g_3(S_2)=3$ 
    while $g_3(S_3)=4$.

    We also see that 
    \begin{align*}
        N(S_1) &=\{v_3,v_4,v_5\}, W_{S_1}= \{v_1,v_2\} \text{ and } W'_{S_1}= \emptyset,\\
       N(S_2) &=\{v_1,v_2,v_3,v_4,v_5\},  W_{S_2} = \emptyset \text{ and } W'_{S_2}= \emptyset,\\
       N(S_3)&=\{v_1,v_2,v_3\},  W_{S_3} = \emptyset \text{ and } W'_{S_3}= \{v_4,v_5\}.
    \end{align*}
\end{ex}

   \begin{remark}\label{rmk:g(S)}
   Since independent sets in $G$ can be counted in terms of how many vertices of $\overline{U_L}$ are contained in them,  one can express the number of independent sets of size $j$ as 
   $$g_j=\displaystyle \sum_{S \subseteq \overline{U_L}} g_j (S).$$
   \begin{enumerate}
       \item[(a)]  With the above expression of $g_j$, we have
    \begin{equation}\label{eq:g(S)}
     \displaystyle g=\sum_{j=1}^{\alpha} (-1)^{j-1}\left( \sum_{S \subseteq \overline{U_L}} g_j (S)\right)=\sum_{S \subseteq \overline{U_L}} \left( \sum_{j=1}^{\alpha} (-1)^{j-1} g_j (S) \right)= \sum_{S \subseteq \overline{U_L}} g(S)    
    \end{equation} 
    where $\displaystyle g(S):=  \sum_{j=1}^{\alpha} (-1)^{j-1} g_j (S)$.
    \item[(b)]  When  $S\neq \emptyset$, we can express $g(S)$ as follows: 
 \[g(S)=\sum_{j=1}^{\alpha} (-1)^{j-1} g_j (S)= \sum_{j=\vert S\vert}^{\alpha} (-1)^{j-1}g_j (S)=(-1)^{\vert{S}\vert}\sum_{j=0}^{\alpha -\vert S \vert} (-1)^{j-1} g_{j+\vert S\vert} (S) \] 
such that
 \[g_{j+\vert S\vert} (S)=\sum_{i=0}^{j} 2^i \binom{| W_S|}{i}\binom{|W'_S|+| U_L |-| W_S|}{j-i}. \]
 The last expression for $g_{j+|S|}(S)$ represents how many ways one can construct an independent set of size $j+|S|$ whose intersection with $\overline{U_L}$ is $S$ by choosing the remaining $j$ vertices. In particular, we can first choose $i$ vertices from either $|W_S|$ or $N(W_S)\cap U_L$ and then choose $j-i$ vertices from the remaining vertices in $G$ that are not adjacent to $S$ and not in $\overline{U_L}$. 
\item[(c)] As previously discussed,  when $S=\emptyset$, we have $$\displaystyle g(\emptyset) =\sum_{j=1}^{m} (-1)^{j-1} g_{j} (\emptyset) \text{ such that } g_{j} (\emptyset)=\sum_{i=0}^{j}  2^i \binom{|U_L|}{i}\binom{| \overline{V_W}|}{j-i}. $$ 
   \end{enumerate}
   \end{remark}

Equation (\ref{eq:g(S)}) suggests that value of $g$ can be analyzed by studying \( g(S) \) for \( S \subseteq \overline{U_L} \). Thus we investigate \( g(S) \) by considering which vertices of $V$ are whiskered and which are not.  In particular, we examine the cases \( \overline{V_W} = \emptyset \) and  \( \overline{V_W} \neq \emptyset \). 
 
    \begin{lem}\label{lem:empty}
Suppose $\overline{V_W}=\emptyset$, i.e. $V_W=V$.  Then $g(S)=0$ when $S\neq \emptyset$; otherwise, 
\[ g(\emptyset)= \begin{cases}
    0 &  \text{if $|V|$ is even},\\
    2 &   \text{if $|V|$ is odd}.
\end{cases}
\]
    \end{lem}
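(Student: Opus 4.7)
The plan is to apply the explicit formulas from Remark~\ref{rmk:g(S)} directly, using the hypothesis $\overline{V_W} = \emptyset$ to simplify them, and then evaluating the resulting double sums via the binomial theorem.

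First I would handle the case $S \neq \emptyset$. Because $\overline{V_W} = \emptyset$, we have $W'_S = \emptyset$, so the formula from Remark~\ref{rmk:g(S)}(b) simplifies to
\[
g_{j+|S|}(S) = \sum_{i=0}^{j} 2^i \binom{w}{i}\binom{\ell}{j-i},
\]
where I abbreviate $w = |W_S|$ and $\ell = |U_L| - |W_S|$. Since $|U_L| = |V_W| = |V|$ and $W_S = V \setminus N(S)$, we get $\ell = |N(S)|$. Substituting into the expression for $g(S)$ in Remark~\ref{rmk:g(S)}(b), swapping the order of summation via the substitution $k = j - i$ (extending the sum to all nonneg.\ $k$, which is harmless since the binomial coefficients vanish for $k > \ell$), and factoring yields
\[
g(S) = (-1)^{|S|+1}\Bigl(\sum_{i \geq 0}(-2)^i\binom{w}{i}\Bigr)\Bigl(\sum_{k \geq 0}(-1)^k \binom{\ell}{k}\Bigr) = (-1)^{|S|+1}(-1)^w\,0^{\ell}.
\]
The key observation is that $\ell = |N(S)| > 0$: since $G$ is a connected bipartite graph (so has no isolated vertices) and $S \subseteq \overline{U_L} \subseteq U$ is nonempty, any vertex of $S$ has at least one neighbor in $V$. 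Thus $0^\ell = 0$ and $g(S) = 0$.

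Next I would handle the case $S = \emptyset$. Using Remark~\ref{rmk:g(S)}(c) with $|\overline{V_W}| = 0$, the binomial coefficient $\binom{0}{j-i}$ forces $i = j$, so $g_j(\emptyset) = 2^j \binom{|U_L|}{j} = 2^j \binom{|V|}{j}$. Then
\[
g(\emptyset) = \sum_{j=1}^{|V|}(-1)^{j-1} 2^j \binom{|V|}{j} = -\Bigl(\sum_{j=0}^{|V|}(-2)^j \binom{|V|}{j} - 1\Bigr) = 1 - (-1)^{|V|},
\]
which evaluates to $0$ when $|V|$ is even and $2$ when $|V|$ is odd, matching the claim.

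There is no real obstacle; the only subtle point is the identification $N(S) \neq \emptyset$ for $S \neq \emptyset$, which uses connectedness (or at least the absence of isolated vertices) together with $S \subseteq U$. Once this is observed, both cases reduce to a standard binomial-theorem computation.
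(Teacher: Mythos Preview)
Your proof is correct and follows essentially the same approach as the paper: both arguments apply Remark~\ref{rmk:g(S)}(b)--(c), simplify via $\overline{V_W}=\emptyset$, reverse the order of summation, and invoke the binomial theorem, with the crucial observation that $N(S)\neq\emptyset$ (equivalently $\ell>0$) coming from connectedness. The only difference is that the paper explicitly verifies the inequalities (e.g.\ $\alpha-|S|\geq |U_L|=m$ and $|W_S|\leq \alpha-|S|$) that justify extending the sums before factoring, whereas you assert this is ``harmless''; it is indeed harmless here, but you might want to record the one-line check $\alpha(G)=n$ and $|S|\leq |\overline{U_L}|=n-m$ to make the bound $\alpha-|S|-i\geq \ell$ transparent.
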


    \begin{proof}
Under the assumption of this lemma, we have $m=|V_W|=|U_L|$. In addition,     $W'_S=\emptyset$ for any $S \subseteq \overline{U_L}$.  Also note that in this case $\alpha(G) = n$.

Suppose $S\neq \emptyset$. Observe that $W_S \subsetneq V_W$ since $N(S)\neq \emptyset$ as $G$ is a connected bipartite graph and $\overline{V_W}$ is assumed to be empty. Next, by \Cref{rmk:g(S)} part (b) and reversing the order of the double sum, we obtain the following set of equalities:
    \begin{align*}
        g(S) &= (-1)^{\vert{S}\vert}\sum_{j=0}^{n-\vert S \vert} (-1)^{j-1} g_{j+\vert S\vert} (S)\\
        &=(-1)^{\vert{S}\vert+1}\sum_{j=0}^{n-\vert S \vert} \left( \sum_{i=0}^{j} (-1)^{j} 2^i \binom{| W_S|}{i}\binom{m-|W_S|}{j-i} \right)\\
        &= (-1)^{\vert{S}\vert+1}\sum_{i=0}^{n-|S|} \left( \sum_{j=i}^{n-| S|} (-1)^j 2^i \binom{| W_S|}{i}\binom{m-| W_S|}{j-i} \right)\\
        &= (-1)^{\vert{S}\vert+1} \sum_{i=0}^{n-\vert S \vert} (-2)^i\binom{|W_S|}{i} \left(\sum_{j=i}^{n-\vert S \vert} (-1)^{j-i}\binom{m-|W_S|}{j-i}\right)\\
        &= (-1)^{\vert{S}\vert+1} \sum_{i=0}^{\vert W_S \vert} (-2)^i\binom{|W_S|}{i} \left(\sum_{k=0}^{m-|W_S|} (-1)^{k}\binom{m-|W_S|}{k}\right)
    \end{align*}
where the last equality follows from substituting $k=j-i$ in the inner sum and the following facts: (1) $n-\vert S\vert \geq n-\left\vert \overline{U_L} \right\vert=\vert U_L \vert =\vert V_W \vert=\vert V\vert =m $, (2) $n-\vert S\vert -i\geq n-\vert S \vert -\vert W_S\vert\geq m- \vert W_S \vert $, for all $0 \leq i \leq \vert W_S\vert$, and (3) $\vert V_W \vert = \vert V \vert > \vert W_S\vert$.

By the Binomial Theorem and the fact that $W_S\subsetneq V_W$, we conclude that $g(S)=0$ when $S\neq \emptyset$ as follows:
$$g(S)= (-1)^{\vert{S}\vert+1} \sum_{i=0}^{\vert W_S \vert} (-2)^i\binom{|W_S|}{i} \underbrace{\left( (1-1)^{m-|W_S|} \right)}_{0}=0.$$

Suppose $S=\emptyset$. Recall from \Cref{rmk:g(S)} part (b) that
$$ g(\emptyset) = \sum_{j=1}^{m} (-1)^{j-1} g_{j} (\emptyset) =\sum_{j=1}^m(-1)^{j-1}\left(\sum_{i=0}^{j} 2^i \binom{m}{i}\binom{|\overline{V_W} |}{j-i} \right) = \sum_{j=1}^m(-1)^{j-1} 2^j \binom{m}{j}.$$
where the last equality follows from the fact that  $\displaystyle \binom{|\overline{V_W} |}{j-i}=0$  unless $j=i$ as $|\overline{V_W} |=0.$  Then, by the Binomial Theorem, we obtain the desired equality as follows:
$$g(\emptyset) =1-\sum_{j=0}^m  (-2)^j \binom{m}{j}= 1-(1-2)^m =\begin{cases} 
0 & \text{ if $m= |V|$ is even}, \\ 
2 & \text{ if $m=|V|$ is odd}.    
\end{cases}$$  
  \end{proof}

\begin{lem}\label{lem:nonempty}
   Suppose $\overline{V_W} \neq\emptyset$. Then,  $g(\emptyset)=1$ and for $S\neq \emptyset$
   $$g(S)=
   \begin{cases} (-1)^{|S |+ |V_W|+1} & \text{if }N(S)=\overline{V_W},  \\ 0 & \text{otherwise}. \end{cases}$$
\end{lem}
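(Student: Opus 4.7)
The plan is to mimic the approach used in the proof of Lemma \ref{lem:empty}, namely to extend the summation range of the alternating sum, swap the order of summation, and apply the Binomial Theorem to factor the resulting expression as a product. The essential new feature here is the presence of $\overline{V_W}$, which introduces a separate ``lone'' factor that will vanish (or not) depending on how $N(S)$ sits inside $V$.

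For $S \neq \emptyset$, I would begin with the formula from Remark \ref{rmk:g(S)}(b):
\[
g(S) = (-1)^{|S|+1}\sum_{j=0}^{\alpha-|S|}(-1)^{j}\sum_{i=0}^{j}2^{i}\binom{|W_S|}{i}\binom{|W_S'|+|U_L|-|W_S|}{j-i}.
\]
Since $\binom{|W_S|}{i}=0$ for $i>|W_S|$ and $\binom{|W_S'|+|U_L|-|W_S|}{j-i}=0$ for $j-i>|W_S'|+|U_L|-|W_S|$, the outer sum can be extended to $j=\infty$ without changing its value. Letting $a=|W_S|$ and $b=|W_S'|+|U_L|-|W_S|$, substituting $k=j-i$ and swapping orders of summation factors the sum as
\[
g(S)=(-1)^{|S|+1}\Bigl(\sum_{i=0}^{a}(-2)^{i}\binom{a}{i}\Bigr)\Bigl(\sum_{k=0}^{b}(-1)^{k}\binom{b}{k}\Bigr)=(-1)^{|S|+1}(-1)^{a}\cdot 0^{b}
\]
by the Binomial Theorem.

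The next step is the combinatorial translation: $g(S)$ is nonzero precisely when $b=0$, i.e.\ when $|W_S'|=0$ and $|W_S|=|U_L|=|V_W|$. The first equality says $\overline{V_W}\subseteq N(S)$ and the second says $N(S)\cap V_W=\emptyset$; together these are exactly $N(S)=\overline{V_W}$. In this case $a=|V_W|$, so $g(S)=(-1)^{|S|+|V_W|+1}$, matching the statement. For $S=\emptyset$ I will apply Remark \ref{rmk:g(S)}(c) and use the same technique: extend the sum to include the $j=0$ term (with $g_0(\emptyset)=1$), factor into $(\sum_i(-2)^i\binom{|U_L|}{i})(\sum_k(-1)^k\binom{|\overline{V_W}|}{k})=(-1)^{|U_L|}\cdot 0^{|\overline{V_W}|}$, which is $0$ because the hypothesis $\overline{V_W}\neq\emptyset$ forces $|\overline{V_W}|\geq 1$, and conclude $g(\emptyset)=1-0=1$.

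The main obstacle is bookkeeping: one must justify extending the summation upper limit past $\alpha-|S|$ (handled by the vanishing of the binomial coefficients) and then correctly decode the algebraic condition $b=0$ into the neighborhood condition $N(S)=\overline{V_W}$. Once these are in place, the argument reduces to a clean application of the Binomial Theorem, parallel to Lemma \ref{lem:empty}, with the crucial distinction that here the $(1-1)^{\bullet}$ factor involves only vertices \emph{outside} $N(S)$ rather than all of $V$.
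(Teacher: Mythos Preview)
Your approach is essentially the paper's: swap the order of summation, factor into two binomial sums, apply the Binomial Theorem, then translate the algebraic condition $b=0$ into $N(S)=\overline{V_W}$. The one step that needs more care is the claim that the outer sum can be extended to $j=\infty$ ``without changing its value'' merely because the binomial coefficients vanish for large arguments. The vanishing only tells you that terms with $j>a+b=|W_S'|+|U_L|$ are zero; it does not by itself rule out nonzero terms in the range $\alpha-|S|<j\le a+b$, which would be \emph{added} by your extension. What you need is the inequality $\alpha-|S|\ge |W_S'|+|U_L|$, and the paper supplies exactly this by observing that $W_S'\sqcup U_L\sqcup S$ is an independent set of $G$ (the leaves in $U_L$ are adjacent only to vertices of $V_W$, while $W_S'\subseteq\overline{V_W}\setminus N(S)$). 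With that bound in place, your factorization and the remainder of the argument go through verbatim, and the $S=\emptyset$ case is fine as written since there the upper limit $m=|U_L|+|\overline{V_W}|$ already matches $a+b$.
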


    \begin{proof}
Recall from \Cref{rmk:g(S)} part (b) that 
$$ g(\emptyset) =\sum_{j=1}^{m} (-1)^{j-1} g_{j} (\emptyset) =\sum_{j=1}^m(-1)^{j-1}\left(\sum_{i=0}^{j} 2^i\binom{|U_L |}{i}\binom{|\overline{V_W} |}{j-i} \right).$$
By separating out the $i=0$ term and then re-indexing the first summation, we can express  $g(\emptyset)$ as follows:
\begin{equation}\label{eq:g(empty)}
g(\emptyset)  =1- \sum_{j=0}^{m} (-1)^{j}\binom{|\overline{V_W} |}{j} -\sum_{j=1}^m\left(\sum_{i=1}^{j}(-1)^{j} 2^i\binom{|U_L |}{i}\binom{|\overline{V_W} |}{j-i} \right).
\end{equation}
 Note that the bounds of the second term of (\ref{eq:g(empty)}) can be restricted since $0<|\overline{V_W}| < m$. Thus, we can apply the Binomial Theorem to obtain
    \[
        \sum_{j=0}^{m} (-1)^{j}\binom{|\overline{V_W} |}{j}= \sum_{j=0}^{|V_W|} (-1)^{j}\binom{|\overline{V_W} |}{j}=(1-1)^{|\overline{V_W}|} =0.
    \]
 In addition, by reversing the order of the sum in the third term of \Cref{eq:g(empty)} and substituting $k=j-i$, we obtain 
    \begin{align*}
        \sum_{i=1}^m\left(\sum_{j=i}^{m}(-1)^{j} 2^i\binom{|U_L |}{i}\binom{|\overline{V_W} |}{j-i} \right)   
            &= \sum_{i=1}^m (-2)^i\binom{|U_L |}{i}\left(\sum_{j=i}^{m} (-1)^{j-i}\binom{|\overline{V_W} |}{j-i} \right)\\
            &= \sum_{i=1}^m(-2)^i\binom{|U_L|}{i}\left(\sum_{k=0}^{m-i}(-1)^k\binom{|\overline{V_W}|}{k}\right).
    \end{align*}

Since $|U_L| = |V_W| < m$ and $|\overline{V_W}| = m - |U_L|$, we can restrict the bounds of the above sum and, by applying the Binomial Theorem, conclude that the last term of \Cref{eq:g(empty)} is zero. Therefore, $g(\emptyset) = 1$.

For $S\neq \emptyset$, let $p =|W_S'|+|U_L|-|W_S|$. Then, using \Cref{rmk:g(S)} part (b) and following the same steps from the proof of \Cref{lem:empty}, we can express $g(S)$ as:
$$g(S)= (-1)^{|S|+1} \sum_{i=0}^{\alpha-|S|} (-2)^i\binom{|W_S|}{i} \left(\sum_{k=0}^{\alpha-|S|-i} (-1)^{k}\binom{p}{k}\right).$$
 Note that disjoint union $W_S\sqcup S$ is an independent set in $G$, as is $W_S'\sqcup U_L \sqcup S$. Therefore $|W_S|\leq \alpha-|S|$ and  $|W_S'|+|U_L|\leq \alpha-|S|$. The latter implies that $p \leq \alpha -|S|-i$ whenever $i\leq |W_S|$. Thus, by restricting the bounds of the sums, we get
$$
   g(S) = (-1)^{|S|+1} \sum_{i=0}^{|W_S|} (-2)^i\binom{|W_S|}{i} \left(\sum_{k=0}^{p} (-1)^{k}\binom{p}{k}\right).
$$
It is immediate that  the inner sum is equal to $1$ when $p=0$. Otherwise, it is equal to zero by the Binomial Theorem. Thus, $g(S)=0$ when $p\neq 0$.

Finally, observe that  $p=0$  if and only if $|W_S'|=0$ and $W_S=V_W$ since $W_S \subseteq V_W$ and $|U_L|=|V_W|$. This means $p=0$ if and only if $N(S)=\overline{V_W}.$  In this case, we have
$$
   g(S) = (-1)^{|S|+1} \sum_{i=0}^{|V_W|} (-2)^i\binom{|V_W|}{i} =(-1)^{|S|+|V_W|+1}.$$
Therefore, we obtain the desired equality.
   \end{proof}

Before the main result of this section, we introduce notation to simplify the statement of our result.

\begin{notation}
  Let \( X \) denote the collection of non-empty subsets of \( \overline{U_L} \) such that the neighborhood of each element in this collection is \( \overline{V_W} \), i.e.,
$$
X = \{ S \subseteq \overline{U_L} : S \neq \emptyset \text{ and } N(S) = \overline{V_W} \}.
$$
In addition, let $q$ denote the difference between the even cardinality elements  and odd cardinality elements in $X$, i.e.
$$q=|\{S\in X : |S| \text{ is even}\}|-|\{T \in X : |T| \text{ is odd}\}|.$$  
\end{notation}

Now, we present the main result of this section.

\begin{thm}\label{thm:bipartite}
 Let $G$ be a connected bipartite graph on vertex set $U\cup V$. 
\begin{enumerate}[a]
        \item If every vertex in $V$ is whiskered so that $\overline{V_W}=\emptyset$, then  $\deg h_{R/I(G)} (t)= \alpha(G)$.
        \item If there are unwhiskered vertices in $V$, $\overline{V_W}\neq \emptyset$, then  $\deg h_{R/I(G)} (t)= \alpha(G)$ if and only if $q\neq 0$.
    \end{enumerate}
\end{thm}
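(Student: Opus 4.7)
The plan is to reduce to the criterion from \Cref{Thm:alternate_sum}, namely $\deg h_{R/I(G)}(t) = \alpha(G)$ if and only if $g \neq 1$, and then evaluate $g$ using the decomposition
\[
g = \sum_{S \subseteq \overline{U_L}} g(S)
\]
from \Cref{rmk:g(S)}(a). In both parts, the heavy lifting has already been done in \Cref{lem:empty} and \Cref{lem:nonempty}; the proof of \Cref{thm:bipartite} amounts to a case analysis plugging those formulas into the above sum.

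For part (a), under the hypothesis $\overline{V_W} = \emptyset$, \Cref{lem:empty} gives $g(S) = 0$ whenever $S \neq \emptyset$, so
\[
g = g(\emptyset) = \begin{cases} 0 & \text{if } |V| \text{ is even}, \\ 2 & \text{if } |V| \text{ is odd}. \end{cases}
\]
In either case $g \neq 1$, and the result follows from \Cref{Thm:alternate_sum}.

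For part (b), under the hypothesis $\overline{V_W} \neq \emptyset$, \Cref{lem:nonempty} yields $g(\emptyset) = 1$ and $g(S) = (-1)^{|S|+|V_W|+1}$ when $S \in X$ and zero otherwise. Substituting into the decomposition,
\[
g = 1 + \sum_{S \in X} (-1)^{|S|+|V_W|+1} = 1 + (-1)^{|V_W|+1}\left(|\{S \in X : |S| \text{ even}\}| - |\{S \in X : |S| \text{ odd}\}|\right) = 1 + (-1)^{|V_W|+1}q.
\]
Thus $g = 1$ if and only if $(-1)^{|V_W|+1}q = 0$, which (since $(-1)^{|V_W|+1}$ is a unit) occurs exactly when $q = 0$. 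Applying \Cref{Thm:alternate_sum} once more gives $\deg h_{R/I(G)}(t) = \alpha(G)$ iff $q \neq 0$.

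The main conceptual ingredient is already in place, since Lemmas \ref{lem:empty} and \ref{lem:nonempty} compute the partial alternating sums $g(S)$ exactly. The only real choice in the argument is organizational: one must separate out the $S = \emptyset$ contribution from the rest and carefully track the sign factor $(-1)^{|V_W|+1}$, which is what converts the signed count of subsets of $X$ into the quantity $q$. No further calculation is required beyond recognizing that the sum over $S \in X$ collapses into the difference between even- and odd-cardinality elements of $X$.
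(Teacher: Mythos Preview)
Your proposal is correct and follows essentially the same approach as the paper: both reduce to \Cref{Thm:alternate_sum}, use the decomposition $g=\sum_{S\subseteq \overline{U_L}} g(S)$ from \Cref{rmk:g(S)}(a), and then plug in the values of $g(S)$ from \Cref{lem:empty} and \Cref{lem:nonempty}. Your expression $g=1+(-1)^{|V_W|+1}q$ is just the paper's $g=1-(-1)^{|V_W|}q$ rewritten, so there is no substantive difference.
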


\begin{proof}
   Suppose $\overline{V_W} = \emptyset$. Then by Equation (\ref{eq:g(S)}) and Lemma  \ref{lem:empty} we see that
$$g= \sum_{j=1}^{n} (-1)^{j-1} g_j= \sum_{S\subseteq \overline{U_L}} g(S)= \begin{cases}
    0 &: |V| \text{ is even},\\
    2 &: |V| \text{ is odd}
\end{cases}$$
so that $g\neq 1$. 
Thus,  $\deg h_{R/I(G)} (t)= \alpha (G)$ by \Cref{Thm:alternate_sum} which proves part (a).

Now suppose \( \overline{V_W} \neq \emptyset \). Recall from  \Cref{lem:nonempty} that \( g(S) = (-1)^{|S| + |V_W| + 1} \) whenever \( S \in X \) and it is zero otherwise. So, using Equation (\ref{eq:g(S)}) and \Cref{lem:nonempty}, we have

$$g = \sum_{S\subseteq \overline{U_L}} g(S)
     = 1+ \sum_{\substack{S \subseteq \overline{U_L}\\
     S \neq \emptyset}} g(S)
    = 1- (-1)^{|V_W|}\sum_{S\in X} (-1)^{|S|}
    = 1- (-1)^{|V_W|} q.$$
Then, by  \Cref{Thm:alternate_sum},  $\deg h_{R/I(G)} (t)= \alpha(G)$ if and only if $q\neq 0$,  proving part (b).
\end{proof}

Observe that \( q = -1 \) when \( G \) is a complete bipartite graph $K_{n,m}$ with $n\geq m >1$ because \( N(S) = \overline{V_W} = V \) for any \( S \subseteq \overline{U_L} = U \) and \( X \cup \{\emptyset\} = \mathcal{P}(U) \). Additionally, for a complete bipartite graph, the number of even and odd cardinality subsets of \( U \) (including the empty set) are equal. In the following corollary, we present a more general version of this observation.  

 \begin{cor}\label{SpecificCases}
Let $G$ be a connected bipartite graph.
\begin{enumerate}[a]
\item If every vertex in $V$ is whiskered, then $\deg h_{R/I(G)} (t)= \alpha(G) = n$.
\item If there are vertices in $V$ that are not whiskered and  $X\cup \{\emptyset\}= \mathcal{P}(T)$ for some non-empty subset $T\subseteq \overline{U_L} $, then $\deg h_{R/I(G)} (t)=\alpha(G)$. In particular, if $G$ is a complete bipartite graph, then $\deg h_{R/I(G)} (t)= n$. 
\end{enumerate}       
   \end{cor}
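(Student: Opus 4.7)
The plan is to apply Theorem \ref{thm:bipartite} in both parts, reducing each to a small combinatorial bookkeeping step.

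For part (a), the hypothesis that every vertex of $V$ is whiskered is exactly the assumption $\overline{V_W} = \emptyset$ of Theorem \ref{thm:bipartite}(a), so $\deg h_{R/I(G)}(t) = \alpha(G)$ is immediate. It only remains to identify $\alpha(G)$ with $n$. Since $G$ is bipartite, $U$ is an independent set giving $\alpha(G) \geq n$. Conversely, the bijection between $V_W = V$ and the corresponding leaves in $U_L$ produces a matching of size $m$, and Theorem \ref{thm:konig} forces $\alpha(G) \leq (n+m) - m = n$.

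For part (b), the assumption $X \cup \{\emptyset\} = \mathcal{P}(T)$ means $X$ is precisely the collection of nonempty subsets of $T$. Therefore $q$ reduces to the difference between the number of nonempty even subsets of $T$ and the number of odd subsets of $T$. Since $T \neq \emptyset$, the Binomial Theorem identity $\sum_{k=0}^{|T|}(-1)^k\binom{|T|}{k} = 0$ gives equal counts of even subsets (with $\emptyset$ included) and odd subsets; subtracting the empty set from the even tally yields $q = -1 \neq 0$. Then Theorem \ref{thm:bipartite}(b) concludes $\deg h_{R/I(G)}(t) = \alpha(G)$.

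For the statement about $K_{n,m}$ with $n \geq m \geq 1$, I would split on $m$. If $m = 1$, every vertex of $V$ is whiskered (every $u \in U$ is a leaf) and part (a) applies, giving $\deg h_{R/I(G)}(t) = n$. If $m \geq 2$, then each $v \in V$ has degree $n \geq 2$, so $V_W = \emptyset$, $\overline{U_L} = U$, and $N(S) = V = \overline{V_W}$ for every nonempty $S \subseteq U$. Thus $X \cup \{\emptyset\} = \mathcal{P}(U)$, part (b) applies with $T = U$, and it remains only to observe $\alpha(K_{n,m}) = n$ since $n \geq m$.

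There is no genuine obstacle once Theorem \ref{thm:bipartite} is available; the argument is essentially a two-line application of the Binomial Theorem together with a standard independence-number computation. The only point requiring care is the case distinction $m=1$ versus $m \geq 2$ in the complete bipartite deduction, which determines whether we invoke part (a) or part (b) of the corollary.
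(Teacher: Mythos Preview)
Your proof is correct and matches the paper's approach: the corollary is stated without proof in the paper, but the remark immediately preceding it computes $q=-1$ for $K_{n,m}$ with $m>1$ via the same even/odd subset count you use, and part (a) is a direct restatement of Theorem~\ref{thm:bipartite}(a). Your justification that $\alpha(G)=n$ via Theorem~\ref{thm:konig} and your case split $m=1$ versus $m\geq 2$ (so that $K_{n,1}$ falls under part (a)) fill in details the paper leaves to the reader.
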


We conclude this section with a couple of examples and two questions that we hope will attract the attention of interested researchers.

\begin{ex}
    Let $H$ be the following connected bipartite graph in \Cref{fig:ex_2}:
    \begin{figure}[ht]
    \centering
    \begin{tikzpicture}[line cap=round,line join=round,>=triangle 45,x=1cm,y=1cm]
    \clip(-4.17354858383824,0.19721795610347175) rectangle (4.143337795929728,3.692965128408889);
        \draw [line width=1pt] (-1.98,0.7)-- (-2,1.9);
        \draw [line width=1pt] (-1.98,0.7)-- (-1,1.9);
        \draw [line width=1pt] (-1.98,0.7)-- (0,1.9);
        \draw [line width=1pt] (0,1.9)-- (0,0.7);
        \draw [line width=1pt] (1,1.9)-- (0,0.7);
        \draw [line width=1pt] (2,0.7)-- (1,2);
        \draw [line width=1pt] (2,0.7)-- (2,2);
        \draw [line width=1pt] (-2,2.1)-- (-2,3.1);
        \draw [line width=1pt] (-1,2.1)-- (-1,3.1);
        \draw [line width=1pt] (0,2.1)-- (0,3.1);
        \draw [line width=1pt] (0,2.1)-- (0,3.1);
    \begin{small}
        \draw [fill=black] (-1.98,0.7) circle (2.5pt);
        \draw[color=black] (-1.8,0.45) node {$u_1$};
        \draw [color=black] (-2,2) ++(-3pt,0 pt) -- ++(3pt,3pt)--++(3pt,-3pt)--++(-3pt,-3pt)--++(-3pt,3pt);
        \draw[color=black] (-1.8,2.18) node {$v_1$};
        \draw [color=black] (-1,2) ++(-3pt,0 pt) -- ++(3pt,3pt)--++(3pt,-3pt)--++(-3pt,-3pt)--++(-3pt,3pt);
        \draw[color=black] (-0.8,2.18) node {$v_2$};
        \draw [color=black] (0,2) ++(-3pt,0 pt) -- ++(3pt,3pt)--++(3pt,-3pt)--++(-3pt,-3pt)--++(-3pt,3pt);
        \draw[color=black] (0.2,2.18) node {$v_3$};
        \draw [fill=black] (0,0.7) circle (2.5pt);
        \draw[color=black] (0.2,0.45) node {$u_2$};
        \draw [fill=black] (2,0.7) circle (2.5pt);
        \draw[color=black] (2.2,0.45) node {$u_3$};
        \draw [fill=black] (1,2) ++(-3pt,0 pt) -- ++(3pt,3pt)--++(3pt,-3pt)--++(-3pt,-3pt)--++(-3pt,3pt);
        \draw[color=black] (1.2,2.18) node {$v_4$};
        \draw [fill=black] (2,2) ++(-3pt,0 pt) -- ++(3pt,3pt)--++(3pt,-3pt)--++(-3pt,-3pt)--++(-3pt,3pt);
        \draw[color=black] (2.2,2.18) node {$v_5$};
        \draw [color=black] (-2,3.2) circle (2.5pt);
        \draw[color=black] (-1.8,3.38) node {$u_4$};
        \draw [color=black] (-1,3.2) circle (2.5pt);
        \draw[color=black] (-0.8,3.38) node {$u_5$};
        \draw [color=black] (0,3.2) circle (2.5pt);
        \draw[color=black] (0.2,3.38) node {$u_6$};
    \end{small}
\end{tikzpicture}
    \caption{The bipartite graph $H$}
    \label{fig:ex_2}
\end{figure}

The sets $U, V, U_L, V_W, \overline{U_L}$ and $\overline{V_W}$ of $H$ are the same as the ones for $G$ in \Cref{ex:3}. Notice that for this graph, $S=\{u_3\}$ is the only non-empty subset of $\overline{U_L}$ such that $N(S)=\{v_4,v_5\}=\overline{V_W}$. So, $X=\{\{u_3\}\}$ and $q=-1$ for this graph $H$. Thus,   $\deg h_{R/I(H)} (t)=\alpha (H)= 6$ by \Cref{thm:bipartite}.
\end{ex}

\begin{ex}
    Let $H'$ be the following connected bipartite graph in \Cref{fig:ex_3}:
    \begin{figure}[ht]
    \centering
    \begin{tikzpicture}[line cap=round,line join=round,>=triangle 45,x=1cm,y=1cm]
    \clip(-4.17354858383824,-1.49721795610347175) rectangle (4.143337795929728,1.692965128408889);
        \draw [line width=1pt] (-3,1.1)-- (-3,0.09);
        \draw [line width=1pt] (-3,-0.09)-- (-3,-1.2);
        \draw [line width=1pt] (-3,-1.2)-- (-1,0);
        \draw [line width=1pt] (-1,0)-- (-1,-1.2);
        \draw [line width=1pt] (1,-1.2)-- (-1,0);
        \draw [line width=1pt] (-1,-1.2)-- (3,0);
        \draw [line width=1pt] (1,-1.2)-- (3,0);
        \draw [line width=1pt] (3,0)-- (3,-1.2);
        \draw [line width=1pt] (3,-1.2)-- (-1,0);
        \draw [line width=1pt] (1,-1.2)-- (1,0);
    \begin{small}
       
        \draw [color=black] (-3,0) ++(-3pt,0 pt) -- ++(3pt,3pt)--++(3pt,-3pt)--++(-3pt,-3pt)--++(-3pt,3pt);
        \draw[color=black] (-2.7,0.1) node {$v_1$};
        \draw [fill=black] (-1,0) ++(-3pt,0 pt) -- ++(3pt,3pt)--++(3pt,-3pt)--++(-3pt,-3pt)--++(-3pt,3pt);
        \draw[color=black] (-0.7,0.1) node {$v_2$};
        \draw [fill=black] (1,0) ++(-3pt,0 pt) -- ++(3pt,3pt)--++(3pt,-3pt)--++(-3pt,-3pt)--++(-3pt,3pt);
        \draw[color=black] (1.3,0.1) node {$v_3$};
        \draw [fill=black] (3,0) ++(-3pt,0 pt) -- ++(3pt,3pt)--++(3pt,-3pt)--++(-3pt,-3pt)--++(-3pt,3pt);
        \draw[color=black] (3.3,0.1) node {$v_4$};
        \draw [fill=black] (-3,-1.2) circle (2.5pt);
        \draw[color=black] (-2.7,-1.38) node {$u_1$};
        \draw [fill=black] (-1,-1.2) circle (2.5pt);
        \draw[color=black] (-0.7,-1.38) node {$u_2$};
        \draw [fill=black] (1,-1.2) circle (2.5pt);
        \draw[color=black] (1.3,-1.38) node {$u_3$};
         \draw [fill=black] (3,-1.2) circle (2.5pt);
        \draw[color=black] (3.3,-1.38) node {$u_4$};
        \draw [color=black] (-3,1.2) circle (2.5pt);
        \draw[color=black] (-2.8,1.38) node {$u_5$};
    \end{small}
\end{tikzpicture}
    \caption{The bipartite graph $H'$}
    \label{fig:ex_3}
\end{figure}

In this situation $U=\{u_1, u_2,u_3, u_4, u_5\}$, $V=\{v_1,v_2,v_3,v_4\}$,  $V_W=\{v_1\}$, $U_L=\{u_5\}$, $ \overline{U_L}=\{u_1,u_2,u_3,u_4\}$ and $\overline{V_W}=\{v_2,v_3,v_4\}$. Notice that $X=\{\{u_3\},\{u_2,u_3\},\{u_3,u_4\},\{u_2,u_3,u_4\}\}$ and $q=0$ for this graph $H$. Thus,   $\deg h_{R/I(H)} (t)<\alpha (H)= 5$ by \Cref{thm:bipartite}. In fact, $\deg h_{R/I(H)} (t)=3$.
\end{ex}

\begin{remark}
    There are many situations in which $q\neq 0$, besides the one mentioned in \Cref{SpecificCases}(b). Two easy to check circumstances in which this happens are: 
    \begin{itemize}
        \item[(1)] if $X=\left\{T': T'\subseteq T \text{ and } \vert T'\vert \geq k \right\}$ for some $T \subseteq \overline{U_L}$ and some positive integer $k$ (see \Cref{fig:ex_4} for an example of such a bipartite graph),
    \end{itemize}
 \begin{figure}[H]
    \centering
    \begin{tikzpicture}[line cap=round,line join=round,>=triangle 45,x=1cm,y=1cm]
    \clip(-4.17354858383824,-1.49721795610347175) rectangle (4.143337795929728,1.692965128408889);
        \draw [line width=1pt] (-3,1.1)-- (-3,0.09);
        \draw [line width=1pt] (-3,-0.09)-- (-3,-1.2);
        \draw [line width=1pt] (-3,-1.2)-- (-1,0);
        \draw [line width=1pt] (-1,0)-- (-1,-1.2);
        \draw [line width=1pt] (1,-1.2)-- (-1,0);
        \draw [line width=1pt] (-1,-1.2)-- (1,0);
        \draw [line width=1pt] (1,-1.2)-- (3,0);
        \draw [line width=1pt] (3,0)-- (3,-1.2);
        \draw [line width=1pt] (3,-1.2)-- (1,0);
        \draw [line width=1pt] (-3,-1.2)-- (-1,0);
       \draw [line width=1pt] (-3,-1.2)-- (1,0);
       \draw [line width=1pt] (-3,-1.2)-- (3,0);
       
    \begin{small}
       
        \draw [color=black] (-3,0) ++(-3pt,0 pt) -- ++(3pt,3pt)--++(3pt,-3pt)--++(-3pt,-3pt)--++(-3pt,3pt);
        \draw[color=black] (-2.7,0) node {$v_1$};
        \draw [fill=black] (-1,0.1) ++(-3pt,0 pt) -- ++(3pt,3pt)--++(3pt,-3pt)--++(-3pt,-3pt)--++(-3pt,3pt);
        \draw[color=black] (-0.7,0.1) node {$v_2$};
        \draw [fill=black] (1,0) ++(-3pt,0 pt) -- ++(3pt,3pt)--++(3pt,-3pt)--++(-3pt,-3pt)--++(-3pt,3pt);
        \draw[color=black] (1.3,0.1) node {$v_3$};
        \draw [fill=black] (3,0) ++(-3pt,0 pt) -- ++(3pt,3pt)--++(3pt,-3pt)--++(-3pt,-3pt)--++(-3pt,3pt);
        \draw[color=black] (3.3,0.1) node {$v_4$};
        \draw [fill=black] (-3,-1.2) circle (2.5pt);
        \draw[color=black] (-2.7,-1.38) node {$u_1$};
        \draw [fill=black] (-1,-1.2) circle (2.5pt);
        \draw[color=black] (-0.7,-1.38) node {$u_2$};
        \draw [fill=black] (1,-1.2) circle (2.5pt);
        \draw[color=black] (1.3,-1.38) node {$u_3$};
         \draw [fill=black] (3,-1.2) circle (2.5pt);
        \draw[color=black] (3.3,-1.38) node {$u_4$};
        \draw [color=black] (-3,1.2) circle (2.5pt);
        \draw[color=black] (-2.8,1.38) node {$u_5$};
    \end{small}
\end{tikzpicture}
    \caption{A bipartite graph with $X=\left\{T': T'\subseteq \{u_2,u_3,u_4\}  \text{ and } \vert T'\vert \geq 2 \right\}$.}
    \label{fig:ex_4}
\end{figure}
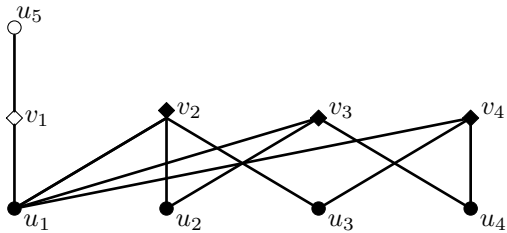

\begin{itemize}
    \item[(2)] if $X=\mathcal{P}(T_1) \setminus \left(\mathcal{P}(T_2) \cup \mathcal{P}(T_3) \right)$ for some $T_1,T_2, T_3 \subseteq \overline{U_L}$, with $T_2$ and $T_3$ disjoint proper subsets of $T_1$ (see \Cref{fig:ex_5} for an example of such a bipartite graph). 
\end{itemize}

\begin{figure}[H]
    \centering
    \begin{tikzpicture}[line cap=round,line join=round,>=triangle 45,x=1cm,y=1cm]
    \clip(-4.17354858383824,-1.49721795610347175) rectangle (6.143337795929728,1.692965128408889);
        \draw [line width=1pt] (-3,1.1)-- (-3,0.09);
        \draw [line width=1pt] (-3,-0.09)-- (-3,-1.2);
        \draw [line width=1pt] (-3,-1.2)-- (3,0);
        \draw [line width=1pt] (-1,0)-- (-1,-1.2);
        \draw [line width=1pt] (3,-1.2)-- (-1,0);
        \draw [line width=1pt] (1,-1.2)-- (1,0);
        \draw [line width=1pt] (3,-1.2)-- (1,0);
          \draw [line width=1pt] (3,0)-- (3,-1.2);
        \draw [line width=1pt] (5,0)-- (-1,-1.2);
        \draw [line width=1pt] (1,-1.2)-- (5,0);
        \draw [line width=1pt] (5,-1.2)-- (5,0);
       \draw [line width=1pt] (5,-1.2)-- (-2.9,0);
    \begin{small}
       
        \draw [color=black] (-3,0) ++(-3pt,0 pt) -- ++(3pt,3pt)--++(3pt,-3pt)--++(-3pt,-3pt)--++(-3pt,3pt);
        \draw[color=black] (-2.7,0.2) node {$v_1$};
        \draw [fill=black] (-1,0) ++(-3pt,0 pt) -- ++(3pt,3pt)--++(3pt,-3pt)--++(-3pt,-3pt)--++(-3pt,3pt);
        \draw[color=black] (-0.7,0.2) node {$v_2$};
        \draw [fill=black] (1,0) ++(-3pt,0 pt) -- ++(3pt,3pt)--++(3pt,-3pt)--++(-3pt,-3pt)--++(-3pt,3pt);
        \draw[color=black] (1.3,0.2) node {$v_3$};
        \draw [fill=black] (3,0) ++(-3pt,0 pt) -- ++(3pt,3pt)--++(3pt,-3pt)--++(-3pt,-3pt)--++(-3pt,3pt);
        \draw[color=black] (3.3,0.2) node {$v_4$};
         \draw [fill=black] (5,0) ++(-3pt,0 pt) -- ++(3pt,3pt)--++(3pt,-3pt)--++(-3pt,-3pt)--++(-3pt,3pt);
        \draw[color=black] (5.3,0.2) node {$v_5$};
        \draw [fill=black] (-3,-1.2) circle (2.5pt);
        \draw[color=black] (-2.7,-1.38) node {$u_1$};
        \draw [fill=black] (-1,-1.2) circle (2.5pt);
        \draw[color=black] (-0.7,-1.38) node {$u_2$};
        \draw [fill=black] (1,-1.2) circle (2.5pt);
        \draw[color=black] (1.3,-1.38) node {$u_3$};
         \draw [fill=black] (3,-1.2) circle (2.5pt);
        \draw[color=black] (3.3,-1.38) node {$u_4$};
        \draw [fill=black] (5,-1.2) circle (2.5pt);
        \draw[color=black] (5.3,-1.38) node {$u_5$};
        \draw [color=black] (-3,1.2) circle (2.5pt);
        \draw[color=black] (-2.8,1.38) node {$u_5$};
       
    \end{small}
\end{tikzpicture}
    \caption{A bipartite graph with $T_1=\{u_2,u_3,u_4\}$, $T_2=\{u_2,u_3\}$ and $T_3=\{u_4\}$.}
    \label{fig:ex_5}
\end{figure}
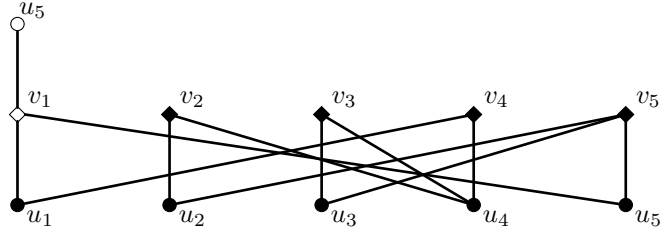
\end{remark}

Our first question is about identifying the classes of connected bipartite graphs for which \(\deg h_{R/I(G)}(t) = \alpha(G)\). According to \Cref{thm:bipartite}, this task is equivalent to determining when \( q \neq 0 \).

\begin{question}
    For which classes of connected bipartite graphs is $q\neq 0$?
\end{question}

 In this paper, we identified several fundamental classes of graphs where  the degree of the $h$-polynomial is \(\alpha(G) \), achieving its maximum value. Our last question generalizes the first one and  invites researchers to identify other classes of graphs with this property.

\begin{question}
    For which classes of graphs is $\deg h_{R/I(G)} (t)=\alpha (G)$?
\end{question}

\section{Connected graphs with the maximum regularity and degree sum}\label{sec:classification}

In the final section of this paper, we consider the relationship between the degree of the $h$-polynomial and the regularity of edge ideals. In particular, we provide a characterization of connected graphs where the following upper bound from \citep[Theorem 13]{hibi2019regularity} is achieved: 
$$\reg R/I(G) + \deg h_{R/I(G)}(t) \leq |V(G)|.$$

This characterization is given in terms of Cameron-Walker graphs. We start the section by recalling an equivalent definition of these graphs.

\begin{thm}\citetext{\citealp[Theorem 1]{CameronWalker}}
    A graph $G$ is a Cameron-Walker graph, i.e. $\nu(G)=\mu(G)$ if and only if it is in one of the following forms (see Figure \ref{fig:CW1}):
\begin{itemize}
    \item a star graph;
    \item a $3$-cycle; or
    \item a graph consisting of a connected bipartite graph with a bipartition $(U,V)$, where $U=\{u_1,\ldots, u_r\}$ and $V=\{v_1,\ldots, v_d\},$ such that at least one leaf edge is attached to each vertex $u_i\in U$ and there may be  one or more pendant triangles attached to the vertices in $V$.
\end{itemize}

\begin{figure}[ht]
    \centering
    \begin{tikzpicture}[line cap=round,line join=round,>=triangle 45,x=1cm,y=1cm]
        \clip(-7.682920413991129,-2.823657521129758) rectangle (7.635113071340564,4.806051759510582);
        \draw [line width=1pt,dashed] (-4,2)-- (-1,2);
        \draw [line width=1pt,dashed] (-1,2)-- (4,2);
        \draw [line width=1pt,dashed] (-4,2)-- (-6,0);
        \draw [line width=1pt,dashed] (4,2)-- (6,0);
        \draw [line width=1pt,dashed] (-6,0)-- (-2,0);
        \draw [line width=1pt,dashed] (-2,0)-- (6,0);
        \draw [line width=1pt] (-4,2)-- (-5,4);
        \draw [line width=1pt] (-4,2)-- (-3,4);
        \draw [line width=1pt] (-1,2)-- (-2,4);
        \draw [line width=1pt] (-1,2)-- (0,4);
        \draw [line width=1pt] (4,2)-- (3,4);
        \draw [line width=1pt] (4,2)-- (5,4);
        \draw [line width=1pt] (-6,0)-- (-7.5,-1.05);
        \draw [line width=1pt] (-7.5,-1.05)-- (-7,-2);
        \draw [line width=1pt] (-7,-2)-- (-6,0);
        \draw [line width=1pt] (-6,0)-- (-5,-2);
        \draw [line width=1pt] (-5,-2)-- (-4.5,-0.95);
        \draw [line width=1pt] (-4.5,-0.95)-- (-6,0);
        \draw [line width=1pt] (-1,0)-- (-2.5,-1);
        \draw [line width=1pt] (-2.5,-1)-- (-2,-2);
        \draw [line width=1pt] (-2,-2)-- (-1,0);
        \draw [line width=1pt] (-1,0)-- (0,-2);
        \draw [line width=1pt] (0,-2)-- (0.5,-1);
        \draw [line width=1pt] (0.5,-1)-- (-1,0);

    \begin{small}
        \draw [fill=black] (-4,2) circle (2pt);
        \draw[color=black] (-3.99,1.7) node {$v_1$};
        \draw [fill=black] (-1,2) circle (2pt);
        \draw[color=black] (-1.01,1.7) node {$v_2$};
        \draw [fill=black] (4,2) circle (2pt);
        \draw[color=black] (3.99,1.7) node {$v_r$};
        \draw[color=black] (0,1) node {Connected bipartite graph on $\{u_1,\ldots, u_d\}\cup \{v_1,\ldots,v_r\}$};
        \draw [fill=black] (-6,0) circle (2pt);
        \draw[color=black] (-6.13,0.22) node {$u_1$};
        \draw [fill=black] (6,0) circle (2pt);
        \draw[color=black] (6.08,0.22) node {$u_d$};
        \draw [fill=black] (-1,0) circle (2pt);
        \draw[color=black] (-0.91,0.22) node {$u_k$};
        \draw [fill=black] (2,0) circle (2pt);
        \draw[color=black] (2.1,0.22) node {$u_{k+1}$};

        \draw [fill=black] (-5,4) circle (2pt);
        \draw[color=black] (-4.91,4.20) node {$u_{1,1}$};
        \draw [fill=black] (-3,4) circle (2pt);
        \draw[color=black] (-2.92,4.20) node {$u_{1,l_1}$};
        \draw [fill=black] (-2,4) circle (2pt);
        \draw[color=black] (-1.91,4.20) node {$u_{2,1}$};
        \draw [fill=black] (0,4) circle (2pt);
        \draw[color=black] (0.07,4.20) node {$u_{2,l_2}$};
        \draw [fill=black] (3,4) circle (2pt);
        \draw[color=black] (3.08,4.20) node {$u_{r,1}$};
        \draw [fill=black] (5,4) circle (2pt);
        \draw[color=black] (5.07,4.20) node {$u_{r,l_r}$};
        \draw [fill=black] (-7.5,-1.05) circle (2pt);
        \draw [fill=black] (-7,-2) circle (2pt);
        \draw[color=black] (-7,-1.2) node {$T_{1,1}$};
        \draw [fill=black] (-5,-2) circle (2pt);
        \draw[color=black] (-5,-1.2) node {$T_{1,t_1}$};
        \draw [fill=black] (-4.5,-0.95) circle (2pt);
        \draw [fill=black] (-2.5,-1) circle (2pt);
        \draw [fill=black] (-2,-2) circle (2pt);
        \draw[color=black] (-2,-1.2) node {$T_{k,1}$};
        \draw [fill=black] (0,-2) circle (2pt);
        \draw[color=black] (0,-1.2) node {$T_{k,t_k}$};
        \draw [fill=black] (0.5,-1) circle (2pt);

        \draw [fill=black] (-4,3.75) circle (.5pt);
        \draw [fill=black] (-4.5,3.75) circle (.5pt);
        \draw [fill=black] (-3.5,3.75) circle (.5pt);
        \draw [fill=black] (-1,3.75) circle (.5pt);
        \draw [fill=black] (-1.5,3.75) circle (.5pt);
        \draw [fill=black] (-0.5,3.75) circle (.5pt);
        \draw [fill=black] (4,3.75) circle (.5pt);
        \draw [fill=black] (4.5,3.75) circle (.5pt);
        \draw [fill=black] (3.5,3.75) circle (.5pt);
         \draw [fill=black] (2,2.75) circle (.5pt);
          \draw [fill=black] (1.5,2.75) circle (.5pt);
           \draw [fill=black] (1,2.75) circle (.5pt);
        \draw [fill=black] (-6.5,-1.75) circle (.5pt);
        \draw [fill=black] (-6,-1.75) circle (.5pt);
        \draw [fill=black] (-5.5,-1.75) circle (.5pt);
        \draw [fill=black] (-1,-1.75) circle (.5pt);
        \draw [fill=black] (-1.5,-1.75) circle (.5pt);
        \draw [fill=black] (-0.5,-1.75) circle (.5pt);

        \draw [fill=black] (-4.25,-.5) circle (.5pt);
        \draw [fill=black] (-3.5,-.5) circle (.5pt);
        \draw [fill=black] (-2.75,-.5) circle (.5pt);
        \draw [decorate,decoration={brace,amplitude=5pt,mirror,raise=2ex}]
  (1.75,0) -- (6.25,0) node[midway,yshift=-2em]{Non-leaf vertices in $U$}node[midway,yshift=-3em]{that are pendant triangle free};
    \end{small}
\end{tikzpicture}
    \caption{Cameron-Walker Graphs}
    \label{fig:CW1}
\end{figure}
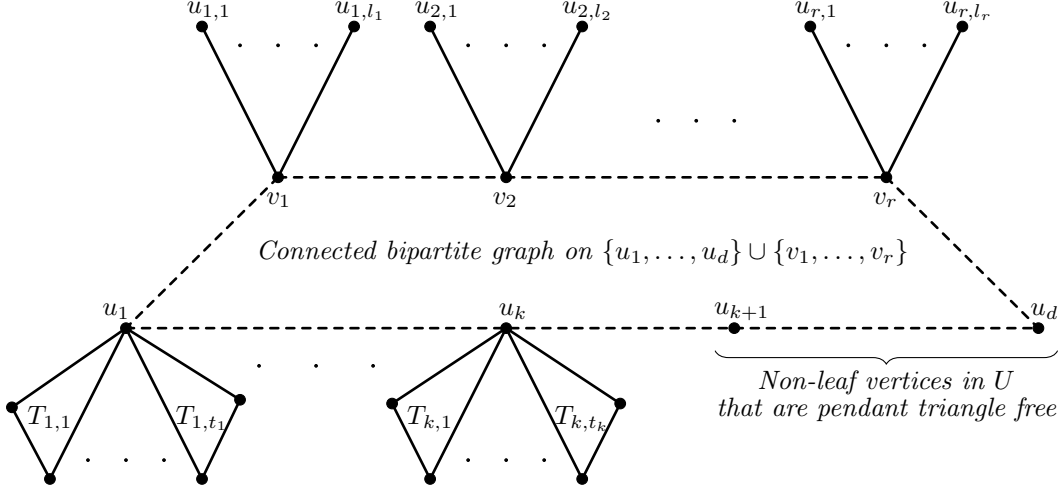

\end{thm}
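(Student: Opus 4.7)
The plan is to prove both directions separately, with the reverse (sufficiency) direction being straightforward and the forward (necessity) direction carrying the main weight. For sufficiency, given a graph $G$ of one of the listed forms, we exhibit an explicit edge set that is simultaneously a maximum matching and an induced matching. For the star $K_{1,n}$ and the triangle $C_3$, both $\mu$ and $\nu$ equal $1$ and the claim is trivial. For the bipartite-with-triangles structure, we select one leaf edge incident to each leaf-receiving vertex together with a suitable edge from each pendant triangle (one not containing the attachment vertex); a direct count shows this set is a maximum matching, and induced-ness follows from the fact that no edge of $G$ connects the endpoints of two chosen edges, thanks to the separation afforded by the bipartite backbone and the pendant structure.

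For necessity, assume $\nu(G)=\mu(G)=t$ and fix a maximum induced matching $M=\{e_1,\ldots,e_t\}$ with $e_i=\{a_i,b_i\}$; by hypothesis $M$ is also a maximum matching. Write $W=V(M)$ and $L=V(G)\setminus W$. Maximality of $M$ as a matching forces $L$ to be independent (an edge inside $L$ would extend $M$), and induced-ness forces $G[W]=M$ (no additional edges among the $a_i,b_j$). We split on $t$. For $t=1$, every pair of edges of $G$ shares a vertex, and a classical Helly-type argument identifies $G$ as either a star (when all edges pass through a common vertex) or a triangle (when three pairwise-intersecting edges have no common vertex); connectivity and $\mu=1$ rule out further edges in either case.

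For the case $t\geq 2$, we aim to show $G$ has the bipartite-with-triangles form. The central tool is a swap analysis of the neighborhood $N(\ell)\cap W$ for each $\ell\in L$: replacing an edge $e_i\in M$ with $\ell a_i$ (when $\ell a_i\in E(G)$) yields a new size-$t$ matching whose induced-ness or extendability can be tested against the hypothesis $\nu(G)=\mu(G)=t$. These swap arguments yield the key restrictions: no $\ell\in L$ is adjacent to endpoints of two distinct edges of $M$; for each $e_i$, one endpoint serves as a \emph{leaf-side} $U$-vertex attracting the leaves in $L$, while the other endpoint may carry pendant triangles arising from $L$-vertices adjacent to both endpoints of $e_i$. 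Defining the bipartition $(U,V)$ on $W$ accordingly, and checking that the remaining edges of $G$ fit the bipartite backbone, the leaf attachments to $U$, and the pendant triangles attached to $V$ completes the classification. The main obstacle is this swap analysis for $t\geq 2$: showing that each forbidden adjacency pattern between $L$ and $W$ produces either an induced matching of size exceeding $t$ or a matching of size exceeding $t$, contradicting the assumed equalities. The bookkeeping of which endpoint of each $e_i$ serves the $U$-role versus the $V$-role, and of how the $L$-vertices distribute across the different $e_i$'s, is intricate and requires a careful case-by-case verification.
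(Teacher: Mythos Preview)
The paper does not supply its own proof of this theorem: it is quoted verbatim as \citetext{\citealp[Theorem 1]{CameronWalker}} and used as a black box in Section~\ref{sec:classification}. There is therefore no in-paper argument to compare your proposal against.

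That said, your outline follows the standard strategy one finds in the original Cameron--Walker reference (and in the related work of Hibi, Higashitani, Kimura and O'Keefe): fix a maximum induced matching $M$, observe that the unmatched vertices $L$ form an independent set and that $G[V(M)]=M$, and then run a swap analysis on the neighborhoods $N(\ell)\cap V(M)$ for $\ell\in L$. Your sketch correctly identifies the two key forbidden patterns --- an $\ell$ adjacent to endpoints of two distinct $e_i$'s, and the wrong distribution of $L$-neighbors across the two endpoints of a single $e_i$ --- but you stop short of actually executing the case analysis, instead acknowledging it as ``intricate.'' Since that case analysis \emph{is} the proof, what you have written is a plausible plan rather than a proof; the substantive work (in particular, pinning down which endpoint of each $e_i$ becomes a $U$-vertex, ruling out mixed adjacency patterns, and verifying that the residual edges among $V(M)$ form a connected bipartite graph) remains to be done.
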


\begin{notation}\label{notation:CW}
Let $G$ be a Cameron-Walker graph as in \Cref{fig:CW1}. Let $L_U$ denote all of the leaf neighbors of vertices in $V=\{v_1,\dots,v_r\}$, i.e. $L_U=\{u_{i,j}~:~1\leq i\leq r, 1\leq j\leq l_i\}.$ Let $T$ denote the set of all pendant triangles in $G$. We set $u_1,\dots, u_k$ to be the vertices with pendant triangles, and $u_{k+1}\dots,u_d$ to be the non-leaf vertices in $U$ that have no pendant triangle. 
\end{notation}

\begin{thm}\citetext{\citealp[Proposition 1.3]{CWRegDeg}; \citealp[Lemma 4.2]{hibi2021regularity}}\label{thm:CWregdeg}
Let $G$ be a Cameron-Walker graph. Adopting the notation set in \Cref{notation:CW} we have the following formulas for the degree of the $h$-polynomial and the regularity of a Cameron-Walker graph $G$. 
\begin{align*}
  \deg h_{R/I(G)} (t)&= |L_U|+ |T|+ (d-k)\\
  \reg R/I(G)&= r+ |T|  
\end{align*}   
\end{thm}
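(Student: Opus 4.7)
The two formulas will be proved independently, both starting from the structural description of Cameron--Walker graphs in \Cref{fig:CW1}.

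For the regularity formula, the Cameron--Walker condition $\nu(G) = \mu(G)$ combined with \Cref{thm:reg_match} gives $\reg R/I(G) = \mu(G)$, so it suffices to compute the matching number. Taking one leaf edge at each $v_i$ (for $1 \leq i \leq r$) together with the edge of each pendant triangle not incident to its attachment vertex yields an induced matching of size $r + |T|$, establishing $r + |T| \leq \nu(G)$. For the reverse inequality, observe that every edge of $G$ is either internal to a pendant triangle or incident to some $v_i \in V$. In any matching, at most one edge lies in each triangle (since the three triangle edges pairwise share vertices) and at most one edge is incident to each $v_i$, so $\mu(G) \leq |T| + r$.

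For the degree formula, \Cref{Thm:alternate_sum} reduces the claim $\deg h_{R/I(G)}(t) = |L_U| + |T| + (d-k)$ to showing both $\alpha(G) = |L_U| + |T| + (d-k)$ and $g(G) \neq 1$. The explicit set $S^{\ast} = L_U \cup \{u_{k+1}, \ldots, u_d\} \cup \{$one non-attachment vertex from each pendant triangle$\}$ is independent of size $|L_U| + |T| + (d-k)$, giving the lower bound on $\alpha(G)$. For the upper bound, decompose an arbitrary independent set $S$ according to $V_0 := S \cap V$: the leaves contribute at most $|L_U| - \sum_{v_i \in V_0} \ell_i$, while bounding the contributions from $U$ and the pendant-triangle vertices shows that enlarging $V_0$ by a single $v_i$ changes $|S|$ by at most $1 - \ell_i$. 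The whiskering assumption $\ell_i \geq 1$ then forces $V_0 = \emptyset$ to be optimal, confirming $\alpha(G) = |S^{\ast}|$.

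To see $g(G) \neq 1$, I will compute $1 - g(G) = \sum_{S \text{ ind}} (-1)^{|S|}$ (the sum ranging over all independent sets, including $S = \emptyset$) by splitting first on $S \cap V$ and then on the subset of leaves chosen at each $v_i \notin S$. For any such $v_i$, the leaves attached to it can be chosen as an arbitrary subset, yielding the internal factor $\sum_{j=0}^{\ell_i}(-1)^j \binom{\ell_i}{j} = 0$ whenever $\ell_i \geq 1$. Thus only configurations with $V \subseteq S$ contribute, and connectedness forces $S \cap U = \emptyset$ and hence also $S \cap L_U = \emptyset$. Within each pendant triangle, the independent subsets (given that the attachment vertex is excluded) are $\emptyset, \{w_1\}, \{w_2\}$, contributing $1 - 1 - 1 = -1$ per triangle. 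Multiplying by the sign $(-1)^r$ coming from $V \subseteq S$ gives $1 - g(G) = (-1)^{r + |T|}$, so $g(G) \in \{0, 2\}$ and in particular $g(G) \neq 1$.

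\textbf{Expected obstacle.} The critical step is the alternating-sum cancellation: the whiskering condition $\ell_i \geq 1$ at every $v_i$ is precisely what collapses the binomial sum $\sum_{j=0}^{\ell_i}(-1)^j\binom{\ell_i}{j}$ to zero and forces $V \subseteq S$, while each pendant triangle then contributes the tidy factor $-1$. Both ingredients are essential for the formula $1 - g(G) = (-1)^{r+|T|} \neq 0$; without them, the sum could collapse to $1$ and the degree of the $h$-polynomial could fail to equal $\alpha(G)$, as happens for some of the bipartite graphs examined in \Cref{sec:bipartite}.
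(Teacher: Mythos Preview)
The paper does not prove \Cref{thm:CWregdeg} at all; it is quoted verbatim from \citep[Proposition~1.3]{CWRegDeg} and \citep[Lemma~4.2]{hibi2021regularity}. Your proposal, by contrast, supplies a complete self-contained argument, and it is correct. The regularity half is exactly the standard reduction: by \Cref{thm:reg_match} one has $\reg R/I(G)=\mu(G)$ for Cameron--Walker graphs, and your edge-partition (triangle edges versus edges meeting $V$) pins down $\mu(G)=r+|T|$ cleanly.

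For the degree half your route is genuinely different from what the cited references do, and it is rather pleasing: instead of computing the Hilbert series of a Cameron--Walker graph directly, you invoke the paper's own \Cref{Thm:alternate_sum}. You first identify $\alpha(G)=|L_U|+|T|+(d-k)$ by the exchange argument ``remove $v_i$, gain its $\ell_i\geq 1$ leaves,'' and then evaluate $1-g=\sum_{S\ \mathrm{ind}}(-1)^{|S|}$ by factoring out the leaves at each $v_i\notin S$. The whiskering condition $\ell_i\geq 1$ kills every summand except $V_0=V$, and the pendant-triangle factor $(-1)$ per triangle gives $1-g=(-1)^{r+|T|}\in\{-1,1\}$, whence $g\in\{0,2\}$. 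This is a tidy illustration that the alternating-sum criterion is strong enough to recover the cited formula, which the paper itself does not observe.

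One small caveat: the step ``connectedness forces $S\cap U=\emptyset$'' uses that every $u_l$ has a neighbour in $V$. This holds whenever the bipartite core has both sides non-empty (the generic third case of the Cameron--Walker classification in \Cref{fig:CW1}), but not for the isolated $3$-cycle (where $r=0$). That degenerate case, and the star $K_{1,n}$, are trivially checked by hand, so the gap is cosmetic rather than substantive.
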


Note that with this notation, 
\[
|V(G)|=r+d+|L_U|+2|T|.    
\]
We are now ready to present main result of this section.

\begin{thm}\label{thm:reg_deg=n}
  Let $G$ be a connected graph. Then $$\reg R/I(G) + \deg h_{R/I(G)}(t) =|V(G)|$$  if and only if $G$ is  a Cameron-Walker graph with no pendant triangles.    
\end{thm}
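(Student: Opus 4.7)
The plan is to stack the chain of inequalities
\[
\reg R/I(G) + \deg h_{R/I(G)}(t) \;\leq\; \mu(G) + \alpha(G) \;\leq\; |V(G)|,
\]
where the first inequality is obtained by adding $\reg R/I(G)\leq \mu(G)$ (the remark following the definition of $\mu$) and $\deg h_{R/I(G)}(t)\leq \alpha(G)$ (Remark \ref{rmk:deg_alpha}), and the second is Theorem \ref{thm:konig}. If the outer sum equals $|V(G)|$, each intermediate inequality must be tight; in particular $\reg R/I(G)=\mu(G)$.

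For the forward direction, Theorem \ref{thm:reg_match} forces $G$ to be a Cameron-Walker graph or the $5$-cycle. The $5$-cycle is excluded immediately, since $\alpha(C_5)+\mu(C_5)=2+2=4<5$. I then split the Cameron-Walker classification into its three forms. The $3$-cycle is handled by a direct computation giving $\reg R/I(C_3)=\deg h_{R/I(C_3)}(t)=1$, so its sum is $2<3$ and it is excluded. Stars fit into the third (bipartite) form as the degenerate case $d=0$. For the general bipartite form I combine the formulas of Theorem \ref{thm:CWregdeg} with the vertex count $|V(G)| = r+d+|L_U|+2|T|$ to compute
\[
\reg R/I(G) + \deg h_{R/I(G)}(t) \;=\; (r+|T|) + \bigl(|L_U|+|T|+(d-k)\bigr) \;=\; |V(G)|-k,
\]
where $k$ is the number of vertices of $U$ supporting at least one pendant triangle. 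Requiring this to equal $|V(G)|$ forces $k=0$, i.e.\ $G$ has no pendant triangles.

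For the converse, I reverse the same calculation: if $G$ is a star, or a bipartite Cameron-Walker graph of the third type with $k=0$, the displayed identity with $k=0$ directly yields $\reg R/I(G) + \deg h_{R/I(G)}(t) = |V(G)|$. Thus the equivalence holds.

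The subtle point I expect to spend care on is the status of $C_3$: technically it is a Cameron-Walker graph with zero pendant triangles, yet the sum is $2<3$, so the statement must be read as tacitly excluding $C_3$ (equivalently, restricting the ``no pendant triangles'' condition to the star and bipartite Cameron-Walker types, of which stars are a degeneracy of the third form). Once that convention is fixed, the proof is mostly an arithmetic exercise built on Theorems \ref{thm:reg_match}, \ref{thm:konig}, \ref{thm:CWregdeg}, and Remark \ref{rmk:deg_alpha}.
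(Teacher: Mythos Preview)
Your proof is correct and follows essentially the same route as the paper: both combine $\reg\le\mu$, $\deg h\le\alpha$, and $\mu+\alpha\le |V(G)|$ with Theorem~\ref{thm:reg_match} to reduce to Cameron--Walker graphs (and $C_5$), and then use the formulas of Theorem~\ref{thm:CWregdeg} to obtain $\reg+\deg h=|V(G)|-k$. Regarding your concern about $C_3$: under the paper's convention (Notation~\ref{notation:CW}) the triangle is the degenerate case $r=0$, $d=k=|T|=1$, i.e.\ a single vertex carrying one pendant triangle, so it \emph{does} count as having a pendant triangle and the statement is consistent as written.
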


\begin{proof}
    Let $G$ be a connected graph on $n$ vertices.  We investigate $\reg R/I(G) + \deg h_{R/I(G)}(t)$ by considering whether $G$ is a Cameron-Walker graph or not and repeatedly applying \Cref{thm:CWregdeg}.
    
Assume $G$ is a Cameron-Walker graph with notation as in \Cref{notation:CW}. If $G$ has no pendant triangles then
    \[
        \reg R/I(G) + \deg h_{R/I(G)} (t) =r+  |L_U|+d = |V(G)|.
    \]
If $G$ has pendant triangles, i.e. $k\geq 1$, then we have
    \begin{align*}
       \reg R/I(G) + \deg h_{R/I(G)} (t) &=(r+|T|)+(|L_U|+|T|+(d-k) ) \\
       &<r+d+|L_U|+2|T|\\
       &=|V(G)|. 
    \end{align*}
Now assume $G$ is not a Cameron-Walker graph. If $G$ is a 5-cycle, then $ \reg R/I(G)=2$  and $\deg h_{R/I(G)} (t)= 2$, by \Cref{thm:reg_match} and \Cref{thm:path_cycle}, respectively. So \[\reg R/I(G) + \deg h_{R/I(G)} (t)<5=\vert V(G) \vert .\]  

If $G$ is not a Cameron-Walker graph and not a 5-cycle then we have $\reg R/I(G) < \mu(G)$ from \Cref{thm:reg_match}. Thus \Cref{rmk:deg_alpha} and \Cref{thm:konig} tells us that \[\reg R/I(G) + \deg h_{R/I(G)} (t) < \mu(G)+\alpha(G)\leq \vert V(G)\vert\]
 concluding the proof.
\end{proof}

\bibliographystyle{abbrvnat}
\bibliography{references}

\begin{thebibliography}{16}
\providecommand{\natexlab}[1]{#1}
\providecommand{\url}[1]{\texttt{#1}}
\expandafter\ifx\csname urlstyle\endcsname\relax
  \providecommand{\doi}[1]{doi: #1}\else
  \providecommand{\doi}{doi: \begingroup \urlstyle{rm}\Url}\fi

\bibitem[Cameron and Walker(2005)]{CameronWalker}
K.~Cameron and T.~Walker.
\newblock The graphs with maximum induced matching and maximum matching the
  same size.
\newblock \emph{Discrete Math.}, 299:\penalty0 49--55, 2005.
\newblock \doi{10.1016/j.disc.2004.07.022}.

\bibitem[Dao and Schweig(2013)]{ProjDim}
H.~Dao and J.~Schweig.
\newblock Projective dimension, graph domination parameters, and independence
  complex homology.
\newblock \emph{J. Combin. Theory Ser. A}, 120\penalty0 (2):\penalty0 453--469,
  2013.
\newblock ISSN 0097-3165,1096-0899.
\newblock \doi{10.1016/j.jcta.2012.09.005}.
\newblock URL \url{https://doi.org/10.1016/j.jcta.2012.09.005}.

\bibitem[Favacchio et~al.(2020)Favacchio, Keiper, and
  Van~Tuyl]{favacchio2020regularity}
G.~Favacchio, G.~Keiper, and A.~Van~Tuyl.
\newblock Regularity and h-polynomials of toric ideals of graphs.
\newblock \emph{Proceedings of the American Mathematical Society}, 148\penalty0
  (11):\penalty0 4665--4677, 2020.

\bibitem[H{\`a} and Van~Tuyl(2008)]{ha2008monomial}
H.~T. H{\`a} and A.~Van~Tuyl.
\newblock Monomial ideals, edge ideals of hypergraphs, and their graded {B}etti
  numbers.
\newblock \emph{Journal of Algebraic Combinatorics}, 27:\penalty0 215--245,
  2008.

\bibitem[Herzog et~al.(2011)Herzog, Hibi, Herzog, and Hibi]{herzog2011monomial}
J.~Herzog, T.~Hibi, J.~Herzog, and T.~Hibi.
\newblock \emph{Monomial ideals}.
\newblock Springer, 2011.

\bibitem[Hibi and Matsuda(2018)]{hibi2018regularity}
T.~Hibi and K.~Matsuda.
\newblock Regularity and h-polynomials of monomial ideals.
\newblock \emph{Mathematische Nachrichten}, 291\penalty0 (16):\penalty0
  2427--2434, 2018.

\bibitem[Hibi et~al.(2019{\natexlab{a}})Hibi, Kanno, and Matsuda]{IndMatch}
T.~Hibi, H.~Kanno, and K.~Matsuda.
\newblock Induced matching numbers of finite graphs and edge ideals.
\newblock \emph{J. Algebra}, 532:\penalty0 311--322, 2019{\natexlab{a}}.
\newblock ISSN 0021-8693,1090-266X.
\newblock \doi{10.1016/j.jalgebra.2019.04.036}.
\newblock URL \url{https://doi.org/10.1016/j.jalgebra.2019.04.036}.

\bibitem[Hibi et~al.(2019{\natexlab{b}})Hibi, Matsuda, and
  Van~Tuyl]{hibi2019regularity}
T.~Hibi, K.~Matsuda, and A.~Van~Tuyl.
\newblock Regularity and $ h $-polynomials of edge ideals.
\newblock \emph{The Electronic Journal of Combinatorics}, pages P1--22,
  2019{\natexlab{b}}.

\bibitem[Hibi et~al.(2021{\natexlab{a}})Hibi, Kanno, Kimura, Matsuda, and
  Van~Tuyl]{hibi2021homological}
T.~Hibi, H.~Kanno, K.~Kimura, K.~Matsuda, and A.~Van~Tuyl.
\newblock Homological invariants of {C}ameron--{W}alker graphs.
\newblock \emph{Transactions of the American Mathematical Society},
  374\penalty0 (09):\penalty0 6559--6582, 2021{\natexlab{a}}.

\bibitem[Hibi et~al.(2021{\natexlab{b}})Hibi, Kimura, Matsuda, and
  Tsuchiya]{CWRegDeg}
T.~Hibi, K.~Kimura, K.~Matsuda, and A.~Tsuchiya.
\newblock Regularity and {$a$}-invariant of {C}ameron-{W}alker graphs.
\newblock \emph{J. Algebra}, 584:\penalty0 215--242, 2021{\natexlab{b}}.
\newblock ISSN 0021-8693,1090-266X.
\newblock \doi{10.1016/j.jalgebra.2021.05.007}.
\newblock URL \url{https://doi.org/10.1016/j.jalgebra.2021.05.007}.

\bibitem[Hibi et~al.(2022)Hibi, Kimura, Matsuda, and
  Van~Tuyl]{hibi2021regularity}
T.~Hibi, K.~Kimura, K.~Matsuda, and A.~Van~Tuyl.
\newblock The regularity and h-polynomial of {C}ameron--{W}alker graphs.
\newblock \emph{Enumerative Combinatorics and Applications}, 2\penalty0 (3),
  2022.

\bibitem[Katzman(2006)]{katzman2006characteristic}
M.~Katzman.
\newblock Characteristic-independence of {B}etti numbers of graph ideals.
\newblock \emph{Journal of Combinatorial Theory, Series A}, 113\penalty0
  (3):\penalty0 435--454, 2006.

\bibitem[Lov\'asz and Plummer(1986)]{LovaszPlummer1986}
L.~Lov\'asz and M.~D. Plummer.
\newblock \emph{Matching Theory}.
\newblock AMS Chelsea Publishing, 1986.

\bibitem[Matsumura(1987)]{matsumura1987}
H.~Matsumura.
\newblock \emph{Commutative Ring Theory}.
\newblock Cambridge Studies in Advanced Mathematics. Cambridge University
  Press, 1987.
\newblock \doi{10.1017/CBO9781139171762}.

\bibitem[Sturmfels(1996)]{Sturmfels1996}
B.~Sturmfels.
\newblock \emph{Gr\"obner Bases and Convex Polytopes}, volume~8 of
  \emph{University Lecture Series}.
\newblock American Mathematical Society, 1996.
\newblock \doi{10.1090/ulect/008}.

\bibitem[Trung(2020)]{RegMatch}
T.~N. Trung.
\newblock Regularity, matchings and {C}ameron-{W}alker graphs.
\newblock \emph{Collect. Math.}, 71\penalty0 (1):\penalty0 83--91, 2020.
\newblock ISSN 0010-0757,2038-4815.
\newblock \doi{10.1007/s13348-019-00250-9}.
\newblock URL \url{https://doi.org/10.1007/s13348-019-00250-9}.

\end{thebibliography}

\end{document}